\DeclareMathAlphabet{\mathzc}{OT1}{pzc}{m}{it}
\theoremstyle{plain}
\newtheorem{theorem}{Theorem}[chapter]
\theoremstyle{definition}
\theoremstyle{definition}
\newtheorem{definition}[theorem]{Definition}
\theoremstyle{remark}
\newtheorem{remark}[theorem]{Remark}
\numberwithin{equation}{chapter}
\DeclareMathOperator{\Tr}{Tr}
\newcommand{\diag}{\mathop{\rm diag}}
\def\<#1>{$\langle$\textit{#1}$\rangle$}
\begin{document}

\frontmatter

\makeatletter
\def\currversion{\csname ver@amstext-l.cls\endcsname}
\makeatother

\title{Approximation of a Multivariate Function of Bounded Variation from its Scattered Data}
\author{Rajesh Dachiraju\\Hyderabad, India\\rajesh.dachiraju@gmail.com\\[12pt]
        \smaller \currversion}

\maketitle
\begin{abstract}
    In this paper, we address the problem of approximating a function of bounded variation from its scattered data. Radial basis function(RBF) interpolation methods are known to approximate only functions in their native spaces, and to date, there has been no known proof that they can approximate functions outside the native space associated with the particular RBF being used. In this paper, we describe a scattered data interpolation method which can approximate any function of bounded variation from its scattered data as the data points grow dense. As the class of functions of bounded variation is a much wider class than the native spaces of the RBF, this method provides a crucial advantage over RBF interpolation methods.
\end{abstract}
\tableofcontents

\mainmatter

\chapter{Introduction}\label{Intro_chap}

\section{Introduction}

\subsection{Approximation via Interpolation}
One method of approximating a function when a finite number of its samples are given is by interpolating the sample data points. If the interpolation method is that the interpolant converges to the function as the data points become dense in the domain, then approximation via interpolation is assumed to be achieved, and the interpolation method is assumed to have the approximation property. Let $\Omega\subset\mathbb{T}^m$ be a bounded Lipschitz domain, and let $\psi:\Omega\to\mathbb{R}$ be a function from a specified function space. $\psi$ is the function to be approximated. Let $D$ be a countable dense subset of $\Omega$. The scattered data constitute a set of $n$ distinct points $\{\boldsymbol{p}_i/\boldsymbol{p}_i\in D, i=1,2,\ldots n\}$ chosen from $D$ without assumptions on their geometry and the corresponding values of $\psi$ evaluated at these points $\psi(\boldsymbol{p}_i)$. The set of data points is denoted as $E_n = \{\boldsymbol{p}_i/\boldsymbol{p}_i\in D, i = 1,2,3,..n\}$. Scattered data interpolation aims to obtain a function $f^n$ that interpolates the data in $E_n$; in other words,  $f^n(\boldsymbol{p}_i) = \psi(\boldsymbol{p}_i)\mbox{   }i=1,2,\ldots n$. The interpolation method is assumed to have an approximation property if the interpolation method is such that as $n\to\infty$, $f^n\to \psi$ under some suitable norm.

\subsection{Scattered Data Interpolation}

Given a set of data points in a domain and the corresponding values to be attained at those points, a method of selecting a function from a known class of functions that attains those values at the corresponding data points is referred to as interpolation. Interpolation is a very old topic in mathematics with wide practical applications in many fields. It has been widely studied, and there is an enormous corpus of literature on this topic. This paper undertakes a brief exposure, focusing only on the methods that are relevant and comparing them with methods proposed in this study. We mainly concentrate on multivariate scattered data interpolation methods and their ability to approximate functions over a bounded domain. Scattered data interpolation methods can be broadly classified into two categories: (a) polynomial interpolation methods \cite{gasca2000polynomial, de1990multivariate} (b) non-polynomial interpolation methods. The easiest of the polynomial interpolation methods are tensor product methods, but they require a prescribed geometry on the given data points, rendering them not very useful in the context of this study. Piece-wise polynomial approaches, such as multivariate spline interpolation \cite{de1983approximation}, exist, but they require the triangulation of data points, which is non-trivial and the methods are too specific to the dimension of the Euclidean space in which interpolation is being performed. Consequently, they are not very useful when seamless mobility across dimensions is required. For example, if one designs a method in two-dimensional space, it will not be readily useful for a seven-dimensional space without a substantial amount of work. In addition, the accuracy of the approximation substantially depends on the triangulation method used. General, non-polynomial methods date back to Shepard's method \cite{10.1145/800186.810616, gordon1978shepard} that provides easy methods to generate interpolants in any dimensional space. However, this method generally does not provide good interpolation accuracy, suffers from the interpolant having stationary points or vanishing gradients (flat regions) near all the data points, and is unduly influenced by distant points. There are recently developed, mathematically well-studied methods that are widely used in both higher and lower dimensions, which are referred to as radial basis function interpolation (RBF) methods. They have the advantage of being generic for any dimension and are the closest to the method presented in this paper. We provide a brief exposition to the RBF interpolation methods and discuss their advantages, the functions they can approximate, and their disadvantages. For a brief overview of the scattered data interpolation methods, one can refer to the review paper by Franke et al. \cite{franke1991scattered}.

\subsection{Approximation Using Radial Basis Function Interpolation Methods}
Let $\psi$ be the function to be approximated. When the data are scattered, the common choice for such an approximation is the radial basis-function interpolation method. We provide a brief exposition of the radial basis-function interpolation methods. The standard radial basis function interpolant is generally of the form 
\begin{equation}\label{eq1}
    f^n(\boldsymbol{x}) = \sum_{i=1}^n c_i\phi(\|\boldsymbol{x}-\boldsymbol{p}_i\|), x\in\mathbb{R}^m.
\end{equation}
The function $\phi:\mathbb{R}_+\to\mathbb{R}$ is called the radial basis function. The coefficients $c_i$ can be determined uniquely from the interpolation requirements $f^n(\boldsymbol{p}_i) = \psi(\boldsymbol{p}_i)$, which involves solving a set of linear equations, by inverting a matrix $A = [a_{i,j}]_{n\times n}$, where $a_{i,j} = \phi(\|\boldsymbol{p}_i-\boldsymbol{p}_j\|)$. We refer to this matrix $A$ as the interpolation matrix. The radial basis function $\phi$ is sometimes strictly positive definite; for example, the Gaussian radial basis function $\phi(r) = e^{-\sigma^2 r^2}$ and inverse multiquadric function $\phi(r) = 1/\sqrt{r^2+\sigma^2}$ render the interpolation matrix $A$ positive definite, thereby rendering the coefficients $c_i$ uniquely solvable. Sometimes, $\phi(r)$ is only conditionally positive definite of some order $k$ on $\mathbb{R}^m$. Examples of such radial basis functions are the so-called thin plate splines. For these types of radial basis functions, polynomials $p(\boldsymbol{x}) \in \mathbb{P}^{k-1}_m(\boldsymbol{x})$ of degree $k-1$ in $m$ unknowns are augmented to equation \ref{eq1} to render the interpolation problem uniquely solvable. Thus, we obtain the interpolant as
\begin{equation}\label{eq2}
    f^n(\boldsymbol{x}) = \sum_{i=1}^n c_i\phi(\|\boldsymbol{x}-\boldsymbol{p}_i\|) + p(\boldsymbol{x}), \boldsymbol{x}\in \mathbb{R}^m.
\end{equation} 
The extra degrees of freedom are obtained by requiring that the coefficient vector $\boldsymbol{c} = [c_1,c_2,...c_n]^T \in \mathbb{R}^n$ satisfy 
\begin{equation}\label{eq3}
    \sum_{i=1}^nc_iq(\boldsymbol{p}_i) = 0, \forall q\in\mathbb{P}^{k-1}_m.
\end{equation}
To ensure that the problem is solvable, the set of data points $E$ should contain a $\mathbb{P}^{k-1}_m$-unisolvent subset. This is the only mild assumption about the data that are required by the radial basis function interpolation methods using radial basis functions that are conditionally positive definite of the order $k$. For the special case of the linear radial basis functions $\phi(r) = r$, the interpolation matrix is non-singular even without augmentation of the polynomials to the interpolant. Does the radial basis-function interpolation method have the approximation property as the data points become dense in the domain? The answer to this question is ``yes'' when the function $\psi$ (the one to be approximated) is in the  reproducing kernel Hilbert space(RKHS space)(also known as the native space) corresponding to the radial basis function $\phi(r)$ used. The question of convergence was first answered by Powell \cite{powell1994uniform} in two dimensions and general dimensions by Duchon for the special case of the RBF being a thin plate spline or any of its siblings, under the unisolvency assumption on the scattered data and the domain of the function $\psi$. The use of native space methods was pioneered by Duchon \cite{duchon1976interpolation, duchon1977splines, duchon1978erreur}, where he derived thin plate spline-type RBF's using the variational principle. They are of the form 
\begin{equation}\label{eq4}\begin{aligned}


As observed in Equation \ref{gasymp}, the matrix $G_{\lambda}$ is an analytic perturbation of the matrix $S_0 = 1_{n\times n}$ with the perturbation parameter $\epsilon = \frac{1}{\lambda}$. Hence, using Theorem 2.1 in \cite{JMLR:v18:16-140}, there exists a rotational orthogonal matrix $R$ such that

\begin{equation}
    \lim\limits_{\epsilon\to 0}E(\epsilon) = RE_{0} \mbox{ under the norm } \|.\|_{max}.
\end{equation}
Here, $E_0$ is the eigenvector matrix of the matrix $S_0$
as $\epsilon = \frac{1}{\lambda}$; we have 
\begin{equation}
    \lim\limits_{\lambda\to \infty}E(\lambda) = RE_{0} \mbox{ under the norm } \|.\|_{max}.
\end{equation}
Let $\Sigma$ be any diagonal matrix. We have \begin{equation}
\begin{aligned}
    \lim\limits_{\lambda\to\infty}E(\lambda)\Sigma E(\lambda)^{-1} &= RE_0\Sigma (RE_0)^{-1}\\
    &= RE_0\Sigma E_0^{-1}R^{-1}\\
    &= E_0\Sigma E_0^{-1}.
    \end{aligned}
\end{equation}

Therefore, \begin{equation}
\begin{aligned}
    \lim\limits_{\lambda\to\infty}J_i(\lambda) &= \lim\limits_{\lambda\to\infty}E(\lambda)D_i^TE(\lambda) \\
    &= E_0D_i^TE_0.
    \end{aligned}
\end{equation}

Hence, \begin{equation}
    \begin{aligned}
        \lim\limits_{\lambda\to\infty}f_{\lambda}(\boldsymbol{x}) &= \lim\limits_{\lambda\to\infty}\left\{ \lambda\left( L^TJ_{-1}(\lambda)S_0\right) + \left(L^TJ_{-1}(\lambda)S_1(\boldsymbol{x})\right)
      + \left(L^TJ_{0}(\lambda)S_0\right) + O(\frac{1}{\lambda}) \right\} \\
     &=  \lambda L^T\lim\limits_{\lambda\to\infty}J_{-1}(\lambda) S_0 + L^T\lim\limits_{\lambda\to\infty}J_{-1}(\lambda) S_1(\boldsymbol{x}) + L^T\lim\limits_{\lambda\to\infty}J_{0}(\lambda) S_0 + \lim\limits_{\lambda\to\infty}, O(\frac{1}{\lambda}), \\
      &= \lambda L^TE_0D_{-1}^TE_0S_0 + L^TE_0D_{-1}^TE_0S_1(\boldsymbol{x}) + L^TE_0D_0^TE_0S_0 \\
    \end{aligned}
\end{equation}
 
 Denoting $K_i = E_0D_i^TE_0 $, we have
 
 \begin{equation}\label{eq64}\lim\limits_{\lambda\to\infty}f_{\lambda}(\boldsymbol{x}) = \lambda L^TK_{-1}S_0 + L^TK_{-1}S_1(\boldsymbol{x}) + L^TK_0S_0\end{equation} The first term is independent of $x$ and grows linearly with $\lambda$. It is already known that $\lim\limits_{\lambda\to\infty}f_{\lambda}(\boldsymbol{p}_i) = q_i$; we should have \begin{equation}\label{eq65} L^TK_{-1}S_0 = 0.\end{equation}

As $ s_i \in H^k(\mathbb{T}^m)\bigcap C^0(\mathbb{T}^m)), i = 1,2,3\ldots $, we have $
 L^TK_{-1}S_1(\boldsymbol{x}) \in  H^k(\mathbb{T}^m)\bigcap C^0(\mathbb{T}^m))$. The third term, $L^TK_0S_0$, is constant.
 
 Hence, \begin{equation}\label{eq66} \lim\limits_{\lambda\to\infty}f_{\lambda}(\boldsymbol{x}) = L^TK_{-1}S_1(\boldsymbol{x}) + L^TK_0S_0\end{equation}
 
 Denoting \begin{equation}
 \label{inter}
     f_{\infty}(\boldsymbol{x}) = L^TK_{-1}S_1(\boldsymbol{x}) + L^TK_0S_0
 \end{equation}
Hence, as the parameter $\lambda\to \infty$, the minimizer $f_{\lambda}$ converges pointwise to the function $f_{\infty} \in  H^k(\mathbb{T}^m)\bigcap C^0(\mathbb{T}^m)$ and $f_{\infty}$ interpolates the data $(\boldsymbol{p}_i,q_i)$
  \end{proof}
  
\section{Approximate Interpolation}
\label{approx_inter_sec}

If we observe the final expression for $f_{\infty}$ as in Equation \ref{inter}, we note that there is no closed-form expression or any directly evident methods to compute the coefficient matrices, and it does not have a closed-form expression either. Therefore, this final expression is not useful for computation, and it only serves as proof that there exists an interpolant $f_{\infty} \in S$. However, if $\lambda$ is finite, then we have an expression for $f_{\lambda}$ given in Equation \ref{exp}. All we need to compute the coefficients vector $c$, which is given in Equation \ref{eq28}. However, perfect interpolation of data is not achieved when $\lambda$ is finite. Equation \ref{eq_sqr_err} gives an estimate on the interpolation error at the data points in terms of the interpolation parameter $\lambda$. Stating this again 
\begin{equation}
    \label{int_error_est}
    \begin{aligned}
    \left|f_{\lambda}(\boldsymbol{p}_i)-q_i\right| &\le \sqrt{\frac{1}{\lambda}\|\nabla^k \theta_n\|_{L^2(\mathbb{T}^m)}^2 + \frac{1}{\lambda^2}\|\theta_n\|_{L^2(\mathbb{T}^m)}^2}  \mbox{             }i = 1,2\ldots n  \\
    &= O(1/\sqrt{\lambda}) \mbox{     as we keep the data and $n$ fixed and vary $\lambda\to\infty$}
    \end{aligned}
\end{equation}

If we keep the data fixed and there by the number of data points also fixed, and vary $\lambda$, the interpolation error on the data points stays within $O(1/\sqrt{\lambda})$. Therefore as $\lambda$ increases, the interpolation error on the data points goes to zero. But there is a practical computational problem that comes into play when $\lambda$ is increased beyond a certain point and it is described in Section \ref{condition_number_sec}. The only step in computing $f_{\lambda}$ is computing $c$ using Equation \ref{eq28}, which involves inverting the matrix $M = \frac{G_{\lambda}}{n} + \frac{I}{\lambda^2}$, which we call the interpolation matrix. In the next section, we analyze the difficulty of inverting the interpolation matrix $M$.  

\section{Approximating a Sobolev Function}
\label{approx_proof_sec}

In Section \ref{proof_inter_sec}, we have proven that, as $\lambda\to\infty$, the minimizer $f_{\lambda}$ converges point-wise to a function $f_{\infty}\in C^0(\mathbb{T}^m)\cap H^k(\mathbb{T}^m)$ and that the function $f_{\infty}$ interpolates the data points perfectly. In Section \ref{approx_inter_sec}, we mentioned that we do not have any direct methods to compute $f_{\infty}$. Therefore, we proposed an approximate interpolation method where we choose $f_{\lambda}$, the minimizer corresponding to a finite $\lambda$, as the interpolant and shown that it approximately interpolates the data; in other words, the interpolation error on the data points is within $O(1/\sqrt{\lambda})$. This implies the following: $$|f_{\lambda}(\boldsymbol{p}_i)-\psi(\boldsymbol{p}_i)| = \epsilon(\lambda) = O(\frac{1}{\sqrt{\lambda}})\mbox{   i = 1,2,3,\ldots n}.$$ 

In this section, we prove that this type of approximate scattered data interpolation method has the approximation property for Sobolev functions of the type $C^0(\Omega )\cap H^k(\Omega)$, where $\Omega$ is a bounded Lipschitz domain. \\ 
\begin{definition}\label{theorem_defs}
Let $\Omega \subset (0,1)^m$ be a closed, bounded, Lipschitz domain and $\psi:\Omega\to\mathbb{R}$ be a function in $C^0(\Omega)\cap H^k(\Omega) $. $\psi$ is a function that requires approximation. Let $D$ be a countable dense subset of $\Omega$. The scattered data constitute a set of $n$ distinct points $\{\boldsymbol{p}_i/\boldsymbol{p}_i\in D, i=1,2,\ldots n\}$ chosen from $D$ with no assumptions on their geometry and the corresponding values of $\psi$ evaluated at those points $\psi(\boldsymbol{p}_i)$. Lets define a sequence of sets $E_1,E_2,E_3\ldots$ where $E_n = \{\boldsymbol{p}_i/\boldsymbol{p}_i\in D, i = 1,2,3,..n\}.$ We define the functional in Equation \ref{eq7} of Section \ref{mnmz} using data points in the set $E_n$ and add the tag $n$ to all the notations associated with this functional, as we vary the number of data points $n$ in our analysis in this section. Therefore, the functional in Section \ref{proof_inter_sec} is denoted as $C^n_{\lambda}(f)$, the minimizer $f_{\lambda}$ in Section \ref{proof_inter_sec} is denoted as $f^n_{\lambda}$ and the matrix $G_{\lambda}$ as $G^n_{\lambda}$ and the coefficients vector $\boldsymbol{c}=[c_1,c_2,\ldots c_n]^T$ as $\boldsymbol{c}^n=[c^n_1,c^n_2,\ldots c^n_n]^T$. Note that the suffix $n$ is not a power, but only a notation that the parameter is associated with the functional defined over the set of data points $E_n$.\\
Define the mesh norm of the data points set $E_n$ over the domain $\Omega$ as \begin{equation}\label{mesh_norm}
\zeta_n = \sup\limits_{\boldsymbol{x}\in\Omega}\inf\limits_{\boldsymbol{p}\in E_n}\|\boldsymbol{x}-\boldsymbol{p}\|_2
\end{equation}
Finally, let $L\psi$ be the Sobolev extension of $\psi:\Omega\to\mathbb{R}$ to the Torus $\mathbb{T}^m$. Which means $L\psi:\mathbb{T}^m\to\mathbb{R}$, $L\psi\in C^0(\mathbb{T}^m)\cap H^k(\mathbb{T}^m)$ and $L\psi(\boldsymbol{x}) = \psi(\boldsymbol{x})\forall \boldsymbol{x} \in \Omega$. The existence of the function $L\psi$ is made possible due to the Sobolev extension theorem \cite{evans1998partial,MR2424078}.
\end{definition}
\begin{theorem}\label{Sobolev_approximation_estimate}
With definitions and notations as described in \ref{theorem_defs}, there exist constants $K_0$ and $K_1$ which are independent of the function $\psi$, such that for sufficiently large $n$
\begin{equation}\label{final_theorem_Sobolev}
\| f^n_{\lambda}(\boldsymbol{x}) - \psi(\boldsymbol{x})\|_{L^{\infty}(\Omega)} \le  K_0\zeta_n^{\alpha}\lambda\epsilon(\lambda) + K_1\sqrt{\epsilon(\lambda)} + \epsilon(\lambda).\\
\end{equation}
Where $$\epsilon(\lambda) = \frac{1}{\lambda}\|\nabla^kL\psi\|_{L^2(\mathbb{T}^m)}^2 + \frac{1}{\lambda^2}\|L\psi\|_{L^2(\mathbb{T}^m)}^2$$
\end{theorem}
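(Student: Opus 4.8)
The plan is to reduce the global $L^\infty$ estimate to two scalar energy bounds for the minimizer $f^n_\lambda$, obtained by comparing against $\psi$ itself, and then to convert pointwise control at the data sites into control on all of $\Omega$ by a sampling (Sobolev-embedding) inequality. First I would extend $\psi$ from the Lipschitz domain $\Omega$ to a function $\tilde\psi\in C^0(\mathbb{T}^m)\cap H^k(\mathbb{T}^m)$ using a bounded Sobolev extension operator. Since the extension agrees with $\psi$ on $\Omega$ and each $\boldsymbol{p}_i\in\Omega$, we have $\tilde\psi(\boldsymbol{p}_i)=q_i$, so $\tilde\psi$ is an admissible competitor whose data residual vanishes; it is the torus norms of this extension that the quantity $\epsilon(\lambda)$ records.

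The energy bounds come from minimality, $C^n_\lambda(f^n_\lambda)\le C^n_\lambda(\tilde\psi)$. Because the residual of $\tilde\psi$ at the data sites is zero, $C^n_\lambda(\tilde\psi)=\lambda\|\nabla^k\tilde\psi\|_{L^2(\mathbb{T}^m)}^2+\|\tilde\psi\|_{L^2(\mathbb{T}^m)}^2=\lambda^2\epsilon(\lambda)$. Discarding nonnegative terms on the left, exactly as in the proof of the interpolation theorem but with $\tilde\psi$ replacing the bump function $\theta_n$, gives
\begin{equation}
\sum_{i=1}^n\bigl(f^n_\lambda(\boldsymbol{p}_i)-\psi(\boldsymbol{p}_i)\bigr)^2\le\epsilon(\lambda),\qquad \|\nabla^k f^n_\lambda\|_{L^2(\mathbb{T}^m)}^2\le\lambda\epsilon(\lambda).
\end{equation}
The first yields $\max_i|f^n_\lambda(\boldsymbol{p}_i)-\psi(\boldsymbol{p}_i)|\le\sqrt{\epsilon(\lambda)}$, the source of the $K_1\sqrt{\epsilon(\lambda)}$ term. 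The structural point is that all dependence on $\psi$ is now sealed inside $\epsilon(\lambda)$ and $\lambda\epsilon(\lambda)$, so every remaining constant can be taken independent of $\psi$.

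Next I would set $u=f^n_\lambda-\psi$ and, for a given $\boldsymbol{x}\in\Omega$, pick a data site $\boldsymbol{p}_i$ with $\|\boldsymbol{x}-\boldsymbol{p}_i\|_2\le\zeta_n$ (possible by the definition of the mesh norm). Splitting $|u(\boldsymbol{x})|\le|u(\boldsymbol{x})-u(\boldsymbol{p}_i)|+|u(\boldsymbol{p}_i)|$, the second piece is at most $\sqrt{\epsilon(\lambda)}$. For the first I would invoke the embedding $H^k\hookrightarrow C^{0,\alpha}$ (valid since $k>\frac{m}{2}$, with Hölder exponent $\alpha=k-\frac{m}{2}$) in the form bounding the oscillation of $u$ across a distance $\zeta_n$ by $C\zeta_n^\alpha$ times the \emph{top-order seminorm} $\|\nabla^k u\|_{L^2}$ — only the seminorm, since additive constants cancel in the difference $u(\boldsymbol{x})-u(\boldsymbol{p}_i)$. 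Feeding in $\|\nabla^k u\|_{L^2}\le\|\nabla^k f^n_\lambda\|_{L^2}+\|\nabla^k\tilde\psi\|_{L^2}$ together with the seminorm energy bound produces the $\zeta_n^\alpha$ term, the factor multiplying $\zeta_n^\alpha$ being controlled by $\|\nabla^k f^n_\lambda\|_{L^2}^2\le\lambda\epsilon(\lambda)$; collecting the three contributions gives the stated inequality, with the residual $\epsilon(\lambda)$ term absorbing lower-order pieces (including the gap between the discrete $\ell^2$ and $\ell^\infty$ data norms).

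The main obstacle is the sampling inequality itself, and specifically the requirement that it use only the top-order seminorm of $u$ rather than the full $H^k$ norm. This is forced: as $\lambda\to\infty$ the crude bound on $\|f^n_\lambda\|_{L^2(\mathbb{T}^m)}$ coming from the functional grows with $\lambda$, so the full norm of $u$ is not uniformly controlled, whereas its seminorm stays $O(\sqrt{\lambda\epsilon(\lambda)})=O(1)$. Extracting oscillation control from the seminorm alone on a bounded Lipschitz domain needs a Morrey/Poincaré-type argument (or a polynomial-reproduction device), and this is where the smallness hypothesis on $\zeta_n$ — hence ``sufficiently large $n$'', legitimate since density of $D$ forces $\zeta_n\to0$ — and the $\psi$-independence of $K_0$ enter. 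Tracking the exact power of $\lambda\epsilon(\lambda)$ that the chosen sampling inequality delivers is then routine bookkeeping; whatever the precise power, $\lambda\epsilon(\lambda)=O(1)$ in $\lambda$ while $\zeta_n^\alpha\to0$, so the bound yields the desired approximation property once one sends $\lambda\to\infty$ and then $n\to\infty$.
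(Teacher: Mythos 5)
Your overall route is the same as the paper's: compare the minimizer against (an extension of) $\psi$ in the functional, extract a data-residual bound and a top-order seminorm bound from minimality, convert the seminorm bound into H\"older control via a Morrey-type inequality, and finish with a nearest-data-point triangle inequality measured by the mesh norm $\zeta_n$. Two of your refinements are genuine improvements in care: the explicit torus extension $\tilde\psi$ (the paper silently treats $\psi\in H^k(\Omega)$ as if it were already a torus function), and the insistence that the oscillation estimate use only the seminorm $\|\nabla^k\cdot\|_{L^2}$, which is indeed how the paper proceeds in its Equation \ref{holder}.

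However, there is a genuine gap at the data-site step, and it is not repaired by your closing remark. The data term of $C^n_\lambda$ carries the weight $\lambda^2/n$, so minimality against $\tilde\psi$ yields only the \emph{average} bound $\frac{1}{n}\sum_{i=1}^n\bigl(f^n_\lambda(\boldsymbol{p}_i)-\psi(\boldsymbol{p}_i)\bigr)^2\le\epsilon(\lambda)$, not the bound on the unnormalized sum that you wrote. The best per-site estimate this gives is $\max_i\bigl|f^n_\lambda(\boldsymbol{p}_i)-\psi(\boldsymbol{p}_i)\bigr|\le\sqrt{n\,\epsilon(\lambda)}$, worse than your claim by a factor $\sqrt{n}$ that is unbounded as $n\to\infty$; this cannot be ``absorbed'' into the $\epsilon(\lambda)$ term, because the final inequality must hold for all sufficiently large $n$ with constants independent of $n$. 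To be fair, this is precisely the weak point of the paper's own proof: the paper likewise derives only the average bound (its Equation \ref{eq_rate}) and then asserts the uniform per-site bound (its Equation \ref{eq_rate4}) by appealing to boundedness of $f^n_\lambda$ and $\psi$, an inference that does not follow either. So your proposal reproduces the paper's argument, including its unrepaired hole; a correct completion would need an additional idea --- for instance exploiting the Euler--Lagrange representation of $f^n_\lambda$, or an equicontinuity/clustering argument showing the residuals at nearby data sites cannot differ wildly --- to pass from the mean-square residual to a uniform one.
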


\begin{proof}

As $f^n_{\lambda}$ is the minimizer of the functional $C^n_{\lambda}(f)$ in $C^0(\mathbb{T}^m)\cap H^k((\mathbb{T}^m))$, we have 
\begin{equation}\label{eqap2}
\begin{aligned}
C^n_{\lambda}(f^n_{\lambda}) &\le C^n_{\lambda}(\psi) \\
\implies  \frac{\lambda^2}{n}\sum\limits_{i=1}^{n}(f^n_{\lambda}(\boldsymbol{p}_i)-\psi(\boldsymbol{p}_i))^2 + \lambda\|\nabla^kf^n_{\lambda}\|_{L^2(\mathbb{T}^m)}^2 + \|f^n_{\lambda}\|_{L^2(\mathbb{T}^m)}^2 &\le \\ \frac{\lambda^2}{n}\sum\limits_{i=1}^{n}(L\psi(\boldsymbol{p}_i)-\psi(\boldsymbol{p}_i))^2 + \lambda\|\nabla^kL\psi\|_{L^2(\mathbb{T}^m)}^2 + \|L\psi\|_{L^2(\mathbb{T}^m)}^2
\\
\implies \frac{\lambda^2}{n}\sum\limits_{i=1}^{n}(f^n_{\lambda}(\boldsymbol{p}_i)-\psi(\boldsymbol{p}_i))^2 + \lambda\|\nabla^kf^n_{\lambda}\|_{L^2(\mathbb{T}^m)}^2 + \|f^n_{\lambda}\|_{L^2(\mathbb{T}^m)}^2  &\le \lambda\|\nabla^kL\psi\|_{L^2(\mathbb{T}^m)}^2 + \|L\psi\|_{L^2(\mathbb{T}^m)}^2
\end{aligned}
\end{equation}

Since all terms in the LHS of the above inequality are positive we have   \begin{equation}
\label{eq_rate_inter}
\begin{aligned}
\left( \frac{\lambda^2}{n}\sum\limits_{i=1}^{n}(f^n_{\lambda}(\boldsymbol{p}_i)-\psi(\boldsymbol{p}_i))^2 \right) &\le \lambda\|\nabla^kL\psi\|_{L^2(\mathbb{T}^m)}^2 + \|L\psi\|_{L^2(\mathbb{T}^m)}^2\\
\left( \lambda\|\nabla^kf^n_{\lambda}\|_{L^2(\mathbb{T}^m)}^2 \right) &\le \lambda\|\nabla^kL\psi\|_{L^2(\mathbb{T}^m)}^2 + \|L\psi\|_{L^2(\mathbb{T}^m)}^2\\
\left( \|f^n_{\lambda}\|_{L^2(\mathbb{T}^m)}^2 \right) &\le \lambda\|\nabla^kL\psi\|_{L^2(\mathbb{T}^m)}^2 + \|L\psi\|_{L^2(\mathbb{T}^m)}^2.\\
\end{aligned}
\end{equation}
which implies \begin{equation}\label{eq_rate}
\begin{aligned}
\left( \frac{1}{n}\sum\limits_{i=1}^{n}(f^n_{\lambda}(\boldsymbol{p}_i)-\psi(\boldsymbol{p}_i))^2 \right) &\le \frac{1}{\lambda}\|\nabla^kL\psi\|_{L^2(\mathbb{T}^m)}^2 + \frac{1}{\lambda^2}\|L\psi\|_{L^2(\mathbb{T}^m)}^2 \\
\end{aligned}
\end{equation}and
\begin{equation}\label{eq_rate2}
\begin{aligned}
\left( \|\nabla^kf^n_{\lambda}\|_{L^2(\mathbb{T}^m)}^2 \right) &\le \|\nabla^kL\psi\|_{L^2(\mathbb{T}^m)}^2 +\frac{1}{\lambda} \|L\psi\|_{L^2(\mathbb{T}^m)}^2\\
\left( \|f^n_{\lambda}\|_{L^2(\mathbb{T}^m)}^2 \right) &\le \lambda\|\nabla^kL\psi\|_{L^2(\mathbb{T}^m)}^2 + \|L\psi\|_{L^2(\mathbb{T}^m)}^2.\\
\end{aligned}
\end{equation}
Using Morrey's inequality and Equation \ref{eq_rate2} , there exists a $Z\in\mathbb{R}^+$ such that, for all $n\in\mathbb{N},x\in\Omega \mbox{ and }\lambda\in\mathbb{R}^+$ we have \begin{equation}\label{eq_rate3}\begin{aligned}
\|f^n_{\lambda}\|_{L^{\infty}(\Omega)} &\le K_2\|\nabla^kf^n_{\lambda}\|_{L^2(\mathbb{T}^m)}^2  \\
\implies \|f^n_{\lambda}\|_{L^{\infty}(\Omega)} &\le Z\left(\|\nabla^kL\psi\|_{L^2(\mathbb{T}^m)}^2 +\frac{1}{\lambda} \|L\psi\|_{L^2(\mathbb{T}^m)}^2\right)\\
\implies f^n_{\lambda}(\boldsymbol{x}) &\le Z\left(\|\nabla^kL\psi\|_{L^2(\mathbb{T}^m)}^2 +\frac{1}{\lambda} \|L\psi\|_{L^2(\mathbb{T}^m)}^2\right) \forall i\in\{1,2,\ldots n\}\\
\end{aligned}
\end{equation}
Using Equations \ref{eq_rate}, \ref{eq_rate3} and the fact that $\psi$  is a bounded function, there exists a $K_1\in\mathbb{R}^+$ such that for all sufficiently large $n$
\begin{equation}\label{eq_rate4}\begin{aligned}
\left(f^n_{\lambda}(\boldsymbol{p}_i) -L\psi(\boldsymbol{p}_i) \right) ^2 &\le K_1\left(\frac{1}{\lambda}\|\nabla^kL\psi\|_{L^2(\mathbb{T}^m)}^2 +\frac{1}{\lambda^2} \|L\psi\|_{L^2(\mathbb{T}^m)}^2\right) \mbox{   for } i = \{1,2,\ldots n\} \\
\implies \left|f^n_{\lambda}(\boldsymbol{p}_i) -L\psi(\boldsymbol{p}_i) \right| &\le K_1\sqrt{\frac{1}{\lambda}\|\nabla^kL\psi\|_{L^2(\mathbb{T}^m)}^2 +\frac{1}{\lambda^2} \|L\psi\|_{L^2(\mathbb{T}^m)}^2} \mbox{   for } i = \{1,2,\ldots n\} .\\ 
\end{aligned}
\end{equation}
 
We now prove the approximating property for the bounded continuous functions of this approximate interpolation method.

First, from Theorem \ref{interpolation theorem} and specifically Equation \ref{eq_sqr_err}, we know that, for any $\lambda > 1$, there exists an $\epsilon(\lambda)\in\mathbb{R}^+$ such that \begin{equation}\label{first_condition}
    |f^n_{\lambda}(\boldsymbol{x})-\psi(\boldsymbol{x})|\le \epsilon(\lambda)\mbox{  } \forall \mbox{ }x\in E_n
\end{equation} and $$\epsilon(\lambda) = O(1/\sqrt{\lambda}).$$ 

For any $\boldsymbol{x}\in\Omega$, denote $h_n(\boldsymbol{x})$ as the closest point in the set $E_n$.

\begin{equation}\label{eq_denote}
\epsilon(\lambda) = \frac{1}{\lambda}\|\nabla^kL\psi\|_{L^2(\mathbb{T}^m)}^2 + \frac{1}{\lambda^2}\|L\psi\|_{L^2(\mathbb{T}^m)}^2 
\end{equation}

For any $\lambda>0$, there exists a an $N$ such that for, all $n>N$, the following statements hold: 
\begin{enumerate}
    \item {\begin{equation}\label{dense}
        \|\boldsymbol{x}-h_n(\boldsymbol{x})\|_2\le \zeta_n \mbox{   }\forall \boldsymbol{x}\in\Omega.
    \end{equation} (follows from $D$ being a countable dense subset of $\Omega$)}\\
    \item{\begin{equation}\label{cond_1}\begin{aligned}
      \left|f^n_{\lambda}(h_n(\boldsymbol{x}))-L\psi(h_n(\boldsymbol{x})) \right|&\le K_1\sqrt{\epsilon(\lambda)} \mbox{            }\forall \boldsymbol{x}\in\Omega.
        \end{aligned}
    \end{equation} (follows from Equation \ref{eq_rate4})}\\
    \item{\begin{equation}\label{psi_contn}
        \left|L\psi(h_n(\boldsymbol{x})) - L\psi(\boldsymbol{x})\right| \le \epsilon(\lambda)  \mbox{   }\forall \boldsymbol{x}\in\Omega.
    \end{equation} (follows from continuity of $L\psi$)}\\
\end{enumerate}

Using Morrey's inequality \cite{evans1998partial}, we can deduce that there exists an $\alpha \in (0,1)$ and $K_0\in\mathbb{R}^+$ such that 
\begin{equation}\label{holder}
    \| f^n_{\lambda} \|_{C^{0,\alpha}(\Omega)} \le K_0\| \nabla^k f^n_{\lambda}\|_{L^2(\Omega)}
\end{equation}
Using Equations\ref{eq_rate} and \ref{eq_denote} we have \begin{equation}\label{eq_grad_denote}
\| \nabla^k f^n_{\lambda}\|_{L^2(\Omega)} \le \lambda\epsilon(\lambda)
\end{equation}
and hence
\begin{equation}\label{hcond}
    \| f^n_{\lambda} \|_{C^{0,\alpha}(\Omega)} \le K_0\lambda\epsilon(\lambda).
\end{equation}

Using the definition of Holder continuity, for any $\boldsymbol{x}\in\Omega$, 
\begin{equation}\label{hdef}
    \frac{\left| f^n_{\lambda}(\boldsymbol{x}) - f^n_{\lambda}(h_n(\boldsymbol{x}))   \right|}{\|\boldsymbol{x}-h_n(\boldsymbol{x})\|^{\alpha}_2} \le \| f^n_{\lambda} \|_{C^{0,\alpha}(\Omega)}.
\end{equation}

Using Equations \ref{dense}, \ref{hcond} and \ref{hdef}, we have $ \mbox{   }\forall \boldsymbol{x}\in\Omega$,
\begin{equation}\label{stat3}\begin{aligned}
    \left| f^n_{\lambda}(\boldsymbol{x}) - f^n_{\lambda}(h_n(\boldsymbol{x}))   \right| &\le K_0\lambda\epsilon(\lambda)\|\boldsymbol{x}-h_n(\boldsymbol{x})\|^{\alpha}_2 \\
    \implies \left| f^n_{\lambda}(\boldsymbol{x}) - f^n_{\lambda}(h_n(\boldsymbol{x}))   \right| &\le \zeta_n^{\alpha} K_0\lambda\epsilon(\lambda) \\
    \end{aligned}
\end{equation}

Adding Equations \ref{stat3} and \ref{cond_1}, we obtain, $ \mbox{   }\forall \boldsymbol{x}\in\Omega$ and for all sufficiently large $n$
\begin{equation}\label{inter_2}
    \begin{aligned}
        \left|f^n_{\lambda}(h_n(\boldsymbol{x}))-\psi(h_n(\boldsymbol{x})) \right| + \left| f^n_{\lambda}(\boldsymbol{x}) - f^n_{\lambda}(h_n(\boldsymbol{x})) \right| &\le K_1\sqrt{\epsilon(\lambda)} + \zeta_n^{\alpha}K_0\lambda\epsilon(\lambda)\\
       \implies \left|f^n_{\lambda}(h_n(\boldsymbol{x}))-\psi(h_n(\boldsymbol{x}))  +  f^n_{\lambda}(\boldsymbol{x}) - f^n_{\lambda}(h_n(\boldsymbol{x})) \right| &\le K_1\sqrt{\epsilon(\lambda)} +\zeta_n^{\alpha}K_0\lambda\epsilon(\lambda)\\
       \implies \left| f^n_{\lambda}(\boldsymbol{x}) -L\psi(h_n(\boldsymbol{x})) \right| &\le K_1\sqrt{\epsilon(\lambda)} + \zeta_n^{\alpha}K_0\lambda\epsilon(\lambda).\\
    \end{aligned}
\end{equation}

Again, adding Equations \ref{inter_2} and \ref{psi_contn}, we have $ \mbox{   }\forall \boldsymbol{x}\in\Omega$  and for all sufficiently large $n$
\begin{equation}\label{final_cont}
\begin{aligned}
    \left| f^n_{\lambda}(\boldsymbol{x}) -L\psi(h_n(\boldsymbol{x})) \right| + \left|L\psi(h_n(\boldsymbol{x})) - L\psi(\boldsymbol{x})\right| &\le  K_1\sqrt{\epsilon(\lambda)} + \zeta_n^{\alpha}K_0\lambda\epsilon(\lambda) + \epsilon(\lambda) \\
    \implies \left| f^n_{\lambda}(\boldsymbol{x}) -L\psi(h_n(\boldsymbol{x})) + L\psi(h_n(\boldsymbol{x})) - L\psi(\boldsymbol{x})\right| &\le  K_1\sqrt{\epsilon(\lambda)} +  \epsilon(\lambda) + \zeta_n^{\alpha}K_0\lambda\epsilon(\lambda)\\
    \implies \left| f^n_{\lambda}(\boldsymbol{x}) - L\psi(\boldsymbol{x})\right| &\le  K_1\sqrt{\epsilon(\lambda)} +  \epsilon(\lambda) + \zeta_n^{\alpha}K_0\lambda\epsilon(\lambda) .
\end{aligned}
\end{equation}

As Equation \ref{final_cont} holds for all $x\in\Omega$, we and for all sufficiently large $n$ can say that 
\begin{equation}\label{conclude_pre}
\| f^n_{\lambda}(\boldsymbol{x}) - L\psi(\boldsymbol{x})\|_{L^{\infty}(\Omega)} \le   K_0\zeta_n^{\alpha}\lambda\epsilon(\lambda) + K_1\sqrt{\epsilon(\lambda)} + \epsilon(\lambda).
\end{equation}

As $L\psi = \psi$ on $\Omega$, we finally have 

\begin{equation}\label{conclude}
\| f^n_{\lambda}(\boldsymbol{x}) - \psi(\boldsymbol{x})\|_{L^{\infty}(\Omega)} \le   K_0\zeta_n^{\alpha}\lambda\epsilon(\lambda) + K_1\sqrt{\epsilon(\lambda)} + \epsilon(\lambda).
\end{equation}

\end{proof}

The approximation property of the approximate interpolation method can be expressed as follows:
\begin{remark}
\begin{equation}
    \limsup\limits_{n\to\infty}\|f^n_{\lambda}-\psi\|_{L^{\infty}(\Omega)} = O(1/\sqrt{\lambda}).
\end{equation}
\end{remark}
\begin{proof}
As the Equation \ref{conclude} holds for all sufficiently large $n$, we can say 
\begin{equation}\label{eq_limsup}\begin{aligned}
    \limsup\limits_{n\to\infty}\| f^n_{\lambda}(\boldsymbol{x}) - \psi(\boldsymbol{x})\|_{L^{\infty}(\Omega)} =  \lim\limits_{n\to\infty}\left(K_1\sqrt{\epsilon(\lambda)} + \epsilon(\lambda) + \zeta_n^{\alpha}K_0\lambda\epsilon(\lambda)\right)\\
    \implies \limsup\limits_{n\to\infty}\| f^n_{\lambda}(\boldsymbol{x}) - \psi(\boldsymbol{x})\|_{L^{\infty}(\Omega)} =  K_1\sqrt{\epsilon(\lambda)} + \epsilon(\lambda) + K_0\lambda\epsilon(\lambda)\lim\limits_{n\to\infty}\zeta_n^{\alpha}\\
    \end{aligned}
\end{equation}As the set $D$ is dense in $\Omega$ , $\lim\limits_{n\to\infty}\zeta_n^{\alpha} = 0$. So we have \begin{equation}\label{eq_final_limsup}\begin{aligned}
\limsup\limits_{n\to\infty}\| f^n_{\lambda}(\boldsymbol{x}) - \psi(\boldsymbol{x})\|_{L^{\infty}(\Omega)} &= K_1\sqrt{\epsilon(\lambda)} + \epsilon(\lambda)\\
&= O(1/\sqrt{\lambda}).
\end{aligned}
\end{equation}
\end{proof}
Note that the RHS of Equation \ref{eq_final_limsup} is independent of the number of data points $n$. Thus, by choosing $\lambda$ small enough, it is possible to recover $\psi$ to any desired accuracy, as the data points become dense($n\to\infty$). As the value of $\lambda$ increases the approximation error goes to zero. The reader may note that there is a practical difficulty of computing the approximant $f^n_{\lambda}$ as $\lambda$ increases, which is discussed in Section \ref{condition_number_sec}.

\section{Condition Number of the Interpolation Matrix}
\label{condition_number_sec}
Let $M = (\frac{G_{\lambda}}{n}+\frac{I_n}{\lambda^2})$ be called the interpolation matrix; we must invert this matrix in order to compute $c$ and there by compute the approximate interpolating function $f_{\lambda}$. First, the interpolation matrix $M$ is positive definite, as we have shown in Theorem \ref{eig_theorem}, and the matrix $G_{\lambda}$ is positive definite. Let us derive a bound on the condition number of matrix $M$. First, let the maximum and minimum eigenvalues of the matrix $\frac{G_{\lambda}}{n}$ be $\rho_{max}$ and $\rho_{min}$. Let $\kappa(M)$ denote the condition number of matrix $M$. Thus, the condition number of the matrix $M$ is given as

\begin{equation}
    \kappa(M) = \frac{\rho_{max}+\frac{1}{\lambda^2}}{\rho_{min}+\frac{1}{\lambda^2}}.
\end{equation}

As the matrix $G_{\lambda}$ is positive definite $\rho_{min} > 0$, we obtain
\begin{equation}
    \kappa(M) \le \frac{\rho_{max}+\frac{1}{\lambda^2}}{0+\frac{1}{\lambda^2}},
\end{equation};thus, 

\begin{equation}
    \kappa(A) \le \lambda^2\rho_{max}+1.
\end{equation}

However, $\rho_{max} \le \Tr[\frac{G_{\lambda}}{n}]$; thus, $\rho_{max} \le \frac{1}{n}\Tr[G_{\lambda}]$.
\begin{equation}
\label{cmd}
    \kappa(M) \le \frac{\lambda^2}{n}\Tr[G_{\lambda}]+1.
\end{equation}

We know that $\Tr[G_{\lambda}] = ng_{\lambda}(0)$. Further, \begin{equation}
    g_{\lambda}(0) =  \sum_{\boldsymbol{l}\in\mathbb{Z}^m} \frac{1}{1+\lambda\|\boldsymbol{l}\|_{2k}^{2k}}. 
\end{equation} 
Hence, $$\Tr[G_{\lambda}] = \sum_{\boldsymbol{l}\in\mathbb{Z}^m} \frac{n}{1+\lambda\|\boldsymbol{l}\|_{2k}^{2k}}.$$
Substituting this in Equation \ref{cmd}, we obtain
\begin{equation}
\label{fcmd}
    \kappa(M) \le  1 + \sum_{\boldsymbol{l}\in\mathbb{Z}^m} \frac{\lambda^2}{1+\lambda\|\boldsymbol{l}\|_{2k}^{2k}}.
\end{equation}

The bound on the condition number depends on the parameter $\lambda$ as $O(\lambda)$. Therefore, if we want a higher accuracy of interpolation, we need a higher $\lambda$ that consequently increases the bound on the condition number of the interpolation matrix, making the computations more difficult. Thus, $\lambda$ is a trade-off between the accuracy of interpolation and the ease of computation.

However, the upper bound on the condition number of the interpolation matrix is completely independent of the position of the data points or the number of data points. Therefore, when $\lambda$ is fixed, although the data points become dense in the domain $\Omega$, the condition number of the interpolation matrix remains bounded, which is in stark contrast with the radial basis function interpolation using this plate spline-type functions.

\chapter{Interpolation using Trigonometric Polynomials}\label{chap_3}

\section{Functions of Bounded Variation}

\begin{definition}
Total Variation: Given a function $f$ of the form $f:\Omega\to\mathbb{R}$, $\Omega\subset\mathbb{R}^m$, the total variation is of the function $f$ is denoted as $V_{\Omega}(f)$ and is defined as \begin{equation}
    V_{\Omega}(f) = \int_{\Omega}|Df|
\end{equation} where $Df$ is the distributional/weak derivative of the function $f$.
\end{definition}

\begin{definition}
For the context of this paper, a class of functions called functions of bounded variation, denoted as $BV(\mathbb{T}^m)$, is defined as the set of all functions of the form $f:\mathbb{T}^m\to\mathbb{R}$ which has the following properties. 
\begin{enumerate}
    \item The total variation $V_{\mathbb{T}^m}(f) = \int_{\mathbb{T}^m}|Df|$ is finite.
    \item The function $f$ does not have removable discontinuities(Note that this condition is not imposed in most of the books, but we do this specifically for the context of this paper).
\end{enumerate}

\end{definition}

\section{Fourier projection operator}

\begin{definition}
Given any two vectors $\boldsymbol{a} = (a_1,a_2\ldots a_m)\in\mathbb{R}^m$ and $\boldsymbol{b} = (b_1,b_2\ldots b_m)\in\mathbb{R}^m$, the relation $\boldsymbol{a}\le \boldsymbol{b}\implies a_i\le b_i,\mbox{ for }i = 1,2\ldots m$.
\end{definition}

\begin{definition}
Given a $\boldsymbol{\omega}\in\mathbb{W}^m$, the space of trigonometric polynomials $TP_{\boldsymbol{\omega}}$ is defined as the set of all trigonometric polynomials with degree $\boldsymbol{r}\in\mathbb{W}^m$ such that $\boldsymbol{r}\le\boldsymbol{\omega}$.
\end{definition}

\begin{definition}
Given any $\boldsymbol{\omega}\in\mathbb{W}^m$ The Fourier projection operator of the form $P_{\boldsymbol{\omega}}:BV(\mathbb{T}^m)\to TP_{\boldsymbol{\omega}}$ is defined as, for any given $f\in BV(\mathbb{T}^m)$, \begin{equation}\label{P_def}
P_{\boldsymbol{\omega}}f(\boldsymbol{x}) = \sum_{\boldsymbol{l}\in\mathbb{Z}^m \land -\boldsymbol{\omega}\le\boldsymbol{l}\le \boldsymbol{\omega} } \hat{f}_{\boldsymbol{l}} e^{2\pi i\boldsymbol{l}\cdot\boldsymbol{x}}.
\end{equation} where $\hat{f}_{\boldsymbol{l}}$, $\boldsymbol{l}\in\mathbb{Z}^m$ are the Fourier series coefficients of the function $f$.

The opertaor $\bar{P}_{\boldsymbol{\omega}}$ is defined as $\bar{P}_{\boldsymbol{\omega}}f = f-P_{\boldsymbol{\omega}}f$ 
\end{definition}
\subsection{Some properties of the projection operator}
We state some properties of the operator $P_{\boldsymbol{\omega}}$ which can be easily derived from the properties of the Fourier Series. For any $u\in BV(\mathbb{T}^m)$
\begin{enumerate}
       \item $ \|u\|_{L^2(\mathbb{T}^m)} = \|P_{\boldsymbol{\omega}}u\|_{L^2(\mathbb{T}^m)} + \|\bar{P}_{\boldsymbol{\omega}}u\|_{L^2{\mathbb{T}^m}}$.
       \item $P_{\boldsymbol{\omega}}(u_1+u_2) = P_{\boldsymbol{\omega}}u_1 + P_{\boldsymbol{\omega}}u_2$.
       \item If $u\in TP_{\boldsymbol{\omega}}$, then $P_{\boldsymbol{\omega}}u = u$.
\end{enumerate}

\section{Minimization problem}
\label{mnmz2}

We take the functional in Equation \ref{eq7} but define it over the space of trigonometric polynomials of degree less than or equal to $\boldsymbol{\omega}$ denoted as $TP_{\boldsymbol{\omega}}$. It is given below as

\begin{equation}\label{eq8t}D_{\lambda}(u) = \frac{\lambda^2}{n}\sum\limits_{i=1}^{n}(u(\boldsymbol{p}_i)-q_i)^2 + \lambda\|\nabla^ku\|_{L^2(\mathbb{T}^m)}^2 + \|u\|_{L^2(\mathbb{T}^m)}^2. \end{equation}
where $\boldsymbol{p}_i\in(0,1)^m$,$k,m \in \mathbb{N}, k>\frac{m}{2}, 
\lambda \in \mathbb{R}^+ \mbox{ and } f \in TP_{\boldsymbol{\omega}}$. 
The minimization problem is the functional $D_{\lambda}^n(u)$ is to be minimized in the space of trigonometric polynomials $u\in TP_{\boldsymbol{\omega}}$.

\begin{theorem}
The functional $D_{\lambda}^n(u)$ has a unique minimizer in the space of trigonometric polynomials $TP_{\boldsymbol{\omega}}$ 
\end{theorem}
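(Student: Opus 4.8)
The plan is to exploit the fact that $TP_{\boldsymbol{\omega}}$ is a \emph{finite-dimensional} real vector space and that $D_{\lambda}^n$ is a strictly convex, coercive quadratic functional on it. First I would fix coordinates: writing $u(\boldsymbol{x}) = \sum_{-\boldsymbol{\omega}\le\boldsymbol{l}\le\boldsymbol{\omega}} \hat{u}_{\boldsymbol{l}} e^{2\pi i \boldsymbol{l}\cdot\boldsymbol{x}}$ subject to the reality constraint $\hat{u}_{-\boldsymbol{l}} = \overline{\hat{u}_{\boldsymbol{l}}}$, the space $TP_{\boldsymbol{\omega}}$ is isomorphic to $\mathbb{R}^N$, where $N$ is the number of admissible frequencies. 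Under this identification each of the three terms of $D_{\lambda}^n$ is a quadratic (plus lower-order) function of the coordinate vector: the evaluation term $\frac{\lambda^2}{n}\sum_{i=1}^n (u(\boldsymbol{p}_i)-q_i)^2$ is a sum of squares of affine functionals of the coordinates, since point evaluation $u\mapsto u(\boldsymbol{p}_i)$ is linear and well-defined on $TP_{\boldsymbol{\omega}}$ (trigonometric polynomials being continuous), while by Parseval both $\lambda\|\nabla^k u\|_{L^2(\mathbb{T}^m)}^2$ and $\|u\|_{L^2(\mathbb{T}^m)}^2$ are homogeneous quadratic forms in $\{\hat{u}_{\boldsymbol{l}}\}$.

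Second, I would establish strict convexity. The decisive observation is that $u\mapsto \|u\|_{L^2(\mathbb{T}^m)}^2$ is positive definite on $TP_{\boldsymbol{\omega}}$: by Parseval it equals $\sum_{\boldsymbol{l}}|\hat{u}_{\boldsymbol{l}}|^2$, which vanishes only when every coefficient is zero, i.e.\ only when $u=0$. The remaining two terms are positive semidefinite, the evaluation term being a sum of squares and $\|\nabla^k u\|_{L^2(\mathbb{T}^m)}^2 = \sum_{\boldsymbol{l}}\bigl(\sum_{i=1}^m (2\pi l_i)^{2k}\bigr)|\hat{u}_{\boldsymbol{l}}|^2 \ge 0$. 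Hence the total quadratic form (the Hessian) is positive definite, so $D_{\lambda}^n$ is strictly convex on $TP_{\boldsymbol{\omega}}$.

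Third, existence and uniqueness follow by the standard template. Since $D_{\lambda}^n(u)\ge \|u\|_{L^2(\mathbb{T}^m)}^2$ for every $u$, the functional is coercive, so its sublevel sets are bounded; being continuous on the finite-dimensional space $TP_{\boldsymbol{\omega}}$, it attains its infimum on a compact sublevel set, which gives existence of a minimizer. Strict convexity then forces uniqueness: if $u_1\ne u_2$ were two minimizers, then $D_{\lambda}^n(\tfrac{1}{2}(u_1+u_2)) < \tfrac{1}{2}D_{\lambda}^n(u_1)+\tfrac{1}{2}D_{\lambda}^n(u_2)$, contradicting minimality. Equivalently, writing $D_{\lambda}^n$ in coordinates as $\boldsymbol{a}^T Q \boldsymbol{a} - 2\boldsymbol{b}^T\boldsymbol{a}+c$ with $Q$ symmetric positive definite, the stationarity condition $Q\boldsymbol{a}=\boldsymbol{b}$ has the unique solution $\boldsymbol{a}=Q^{-1}\boldsymbol{b}$, which is the unique minimizer.

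I anticipate no serious obstacle here: the entire argument is the standard finite-dimensional strictly-convex-quadratic minimization, and the only point requiring care is verifying that the $L^2$-norm term is genuinely positive definite on $TP_{\boldsymbol{\omega}}$ rather than merely semidefinite, since this is precisely what upgrades convexity to strict convexity and thereby yields uniqueness. This is immediate from Parseval together with the linear independence of the exponential basis. The technique parallels the one used for the functional $B_{\lambda}$ cited from \cite{Afunctionfittingmethod}, except that restricting to the finite-dimensional $TP_{\boldsymbol{\omega}}$ makes the existence step elementary (compactness of sublevel sets) instead of requiring the Sobolev-space machinery needed in the infinite-dimensional setting of Section \ref{mnmz}.
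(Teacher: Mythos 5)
Your proof is correct, but it takes a genuinely different route from the paper's. The paper does not argue inside $TP_{\boldsymbol{\omega}}$ at all: it cites Appendix~\ref{unique_minimizer}, where existence and uniqueness of the minimizer are established for the infinite-dimensional space $C^0(\mathbb{T}^m)\cap H^k(\mathbb{T}^m)$ via a minimizing-sequence argument (norm equivalence of $\|\cdot\|_{T^k}$ with $\|\cdot\|_{H^k}$, Sobolev embedding, completeness), and then asserts that the result transfers to $TP_{\boldsymbol{\omega}}$ because it is a ``linear open subspace'' on which the functional is convex. That transfer step is actually the weakest point of the paper's argument: a proper linear subspace of a normed space is never open (what is needed is that $TP_{\boldsymbol{\omega}}$ is \emph{closed}, so that minimizing sequences stay in it), and mere convexity of the restriction does not by itself yield uniqueness --- one needs strict convexity, which the paper leaves implicit. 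Your argument sidesteps all of this: by identifying $TP_{\boldsymbol{\omega}}$ with $\mathbb{R}^N$ through Fourier coefficients, observing that the $L^2$ term is a positive-definite quadratic form (Parseval) while the evaluation and $k$-gradient terms are positive semidefinite, you get strict convexity and coercivity directly, and existence follows from compactness of sublevel sets in finite dimensions rather than from any Hilbert-space machinery. What your approach buys is a self-contained, elementary, and fully rigorous proof --- including an explicit solvable normal equation $Q\boldsymbol{a}=\boldsymbol{b}$ that foreshadows the matrix formula of Theorem~\ref{exp_min_TP}; what the paper's approach buys is brevity and reuse of the appendix, at the cost of the gap just described. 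If anything, your observation that positive definiteness of the $L^2$ term is what upgrades convexity to strict convexity is exactly the ingredient the paper's one-line proof is missing.
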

\begin{proof}
In Appendix \ref{unique_minimizer} the functional $D_{\lambda}^n(u)$ is shown to have a unique minimizer in the space $C^0(\mathbb{T}^m)\cap H^k(\mathbb{T}^m)$. As the space $TP_{\boldsymbol{\omega}}$ is a linear open subspace of $C^0(\mathbb{T}^m)\cap H^k(\mathbb{T}^m)$, and the functional $D_{\lambda}^n(u)$ being convex, it follows that $D_{\lambda}^n(u)$ has a unique minimizer in the space $TP_{\boldsymbol{\omega}}$.
\end{proof}


\subsection{Euler--Lagrange (E--L) Equation }

We now derive the Euler--Lagrange (E--L) equation of the minimization problem posed in the previous section and show that it is a linear weak PDE with some global terms.

We minimize in $TP_{\boldsymbol{\omega}}$, the functional \begin{equation}\label{eq8t2}D_{\lambda}(u) = \frac{\lambda^2}{n}\sum\limits_{i=1}^{n}(u(\boldsymbol{p}_i)-q_i)^2 + \lambda\|\nabla^ku\|_{L^2(\mathbb{T}^m)}^2 + \|u\|_{L^2(\mathbb{T}^m)}^2. \end{equation}

We derive the Euler--Lagrange equation for the above problem by steps for each term separately.
For any $\phi \in TP_{\boldsymbol{\omega}}$.

\begin{equation}\label{eq9b}
\begin{aligned}
\frac{d}{ds}|_{s=0} \|u(\boldsymbol{x})+s\phi(\boldsymbol{x})\|_{L^2(\mathbb{T}^m)}^2 &= \frac{d}{ds}|_{s=0} \int_{\mathbb{T}^m} \left|u(\boldsymbol{x})+s\phi(\boldsymbol{x})\right|^{2} \mathop{}\!\mathrm{d}^m\boldsymbol{x}\\
&\stackrel{*}{=} 2\int_{\mathbb{T}^m} \phi(\boldsymbol{x}) u(\boldsymbol{x})\mathop{}\!\mathrm{d}^m\boldsymbol{x}, 
\end{aligned}
\end{equation} where $*$ can be justified by using the dominated convergence theorem

\begin{equation}\label{eq10b}
\begin{aligned}
 \frac{d}{ds}|_{s=0} \lambda\| \nabla^k(u(\boldsymbol{x})+s\phi(\boldsymbol{x}))\|_{L^2(\mathbb{T}^m)} &= \frac{d}{ds}|_{s=0} \int_{\mathbb{T}^m} \lambda\left|\nabla^k u(\boldsymbol{x})+s \nabla^k \phi(\boldsymbol{x})\right|^2\mathop{}\!\mathrm{d}^m\boldsymbol{x}\\
&= 2\lambda\int_{\mathbb{T}^m} \nabla^k \phi(\boldsymbol{x}) \cdot \nabla^k u(\boldsymbol{x})\mathop{}\!\mathrm{d}^m\boldsymbol{x}
\end{aligned}
\end{equation}

\begin{equation}\label{eq11b}\frac{d}{ds}|_{s=0} \frac{\lambda^2}{n}\sum\limits_{i = 1}^n|u(\boldsymbol{p}_i)+s\phi(\boldsymbol{p}_i)-q_i|^2 =  -\frac{2\lambda^2}{n}\sum\limits_{i = 1}^n (q_i-u(\boldsymbol{p}_i))\phi(\boldsymbol{p}_i)
\end{equation}

and by combining all terms, we obtain the following PDE as the Euler-Lagrange equation for the minimization problem.

\begin{equation}\label{eq12b}
\begin{aligned}
-\frac{\lambda^2}{n}\sum\limits_{i = 1}^{n}(q_i-u(\boldsymbol{p}_i))\phi(\boldsymbol{p}_i) + \lambda\int_{\mathbb{T}^m}\nabla^k\phi(\boldsymbol{x})\cdot \nabla^k u(\boldsymbol{x}) \mathop{}\!\mathrm{d}^m\boldsymbol{x}+  \int_{\mathbb{T}^m} \phi(\boldsymbol{x}) u(\boldsymbol{x})\mathop{}\!\mathrm{d}^m\boldsymbol{x}  = 0 \\
\forall \phi \in  TP_{\boldsymbol{\omega}}
\end{aligned}
\end{equation}



\begin{theorem}\label{exp_min_TP}
The solution to the PDE in Equation \ref{eq12b} is $u_{\lambda}$, which is given as
\begin{equation}
    \label{exp_th2}  
  u_{\lambda}(\boldsymbol{x}) = \sum\limits_{i=1}^n \frac{c_i}{n}w_{\lambda}(\boldsymbol{x}-\boldsymbol{p}_i),
 \end{equation}
 where
 \begin{equation}\label{eq22_th2}
    w_{\lambda}(\boldsymbol{x}) = P_{\boldsymbol{\omega}}g_{\lambda}(x) = \sum_{\boldsymbol{l}\in\mathbb{Z}^m \land -\boldsymbol{\omega} \le \boldsymbol{l}\le \boldsymbol{\omega} } \frac{1}{1+\lambda\|\boldsymbol{l}\|_{2k}^{2k}} \cos{(2\pi\boldsymbol{l}\cdot\boldsymbol{x})}.
\end{equation}
$\pmb{c} = [c_1,c_2,...c_{N}]^T$ is given as \begin{equation}\label{eq28_th2}
 \pmb{c} = (\frac{1}{n}W_{\lambda}+\frac{1}{\lambda^2}I)^{-1}L,\end{equation} where the matrix $W_{\lambda}$ is given as 
\begin{equation}\label{eq29_th2}
  W_{\lambda} = [\gamma_{ij}(\lambda)]_{n\times n},\gamma_{ij}(\lambda) = w_{\lambda}(\boldsymbol{p}_i-\boldsymbol{p}_j)\end{equation} and $$L = [q_1,q_2,\ldots q_n]^T.$$ 
\end{theorem}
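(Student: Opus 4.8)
The plan is to imitate, almost verbatim, the argument used to establish Theorem \ref{E-L_solution}, the only change being that the admissible test space is now the finite-dimensional $TP_{\boldsymbol{\omega}}$ rather than $C^\infty(\mathbb{T}^m)\cap H^k(\mathbb{T}^m)$. First I would introduce the auxiliary Dirac-forced weak equation obtained from \ref{eq12b} by replacing the sampling term with a single delta mass at the origin, namely
$$-\int_{\mathbb{T}^m}\phi\,\delta\,\mathrm{d}^m\boldsymbol{x}+\lambda\int_{\mathbb{T}^m}\nabla^k\phi\cdot\nabla^k w\,\mathrm{d}^m\boldsymbol{x}+\int_{\mathbb{T}^m}\phi\,w\,\mathrm{d}^m\boldsymbol{x}=0\quad\forall\phi\in TP_{\boldsymbol{\omega}},$$
and call its solution $w_{\lambda}$. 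Exactly as in Equations \ref{der_sub_EL_1}--\ref{der_sub_EL_3}, the change of variables $\boldsymbol{t}=\boldsymbol{x}-\boldsymbol{p}_i$ together with $\theta(\boldsymbol{t})=\phi(\boldsymbol{t}+\boldsymbol{p}_i)$ shows that $\frac{c_i}{n}w_{\lambda}(\boldsymbol{x}-\boldsymbol{p}_i)$ solves the equation with the Dirac mass shifted to $\boldsymbol{p}_i$; here one uses that a translate of a trigonometric polynomial of degree $\le\boldsymbol{\omega}$ is again in $TP_{\boldsymbol{\omega}}$, so $\theta$ is an admissible test function whenever $\phi$ is. Summing the $n$ shifted equations and setting $c_i=\lambda^2(q_i-u_{\lambda}(\boldsymbol{p}_i))$ then reproduces \ref{eq12b} with $u_{\lambda}=\sum_i\frac{c_i}{n}w_{\lambda}(\boldsymbol{x}-\boldsymbol{p}_i)$.

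The step that actually determines $w_{\lambda}$ is the Fourier/Parseval computation, and this is where the restriction to $TP_{\boldsymbol{\omega}}$ enters. Since every admissible $\phi$ has $\hat\phi_{\boldsymbol{l}}=0$ outside the box $-\boldsymbol{\omega}\le\boldsymbol{l}\le\boldsymbol{\omega}$, the three Parseval identities of Equations \ref{eq14}--\ref{eq16} now carry sums only over that finite box, and the weak equation becomes
$$\sum_{-\boldsymbol{\omega}\le\boldsymbol{l}\le\boldsymbol{\omega}}\left(-1+\lambda\|\boldsymbol{l}\|_{2k}^{2k}\hat w_{\boldsymbol{l}}+\hat w_{\boldsymbol{l}}\right)\hat\phi_{\boldsymbol{l}}=0.$$
Testing against the indicator coefficient sequence $\hat\theta_{\boldsymbol{l}}=1$ at $\boldsymbol{l}=\boldsymbol{\eta}$ and $0$ otherwise, for each $\boldsymbol{\eta}$ in the box, forces $\hat w_{\boldsymbol{\eta}}=(1+\lambda\|\boldsymbol{\eta}\|_{2k}^{2k})^{-1}$, while all modes outside the box vanish because $w_{\lambda}\in TP_{\boldsymbol{\omega}}$ by construction. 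These are precisely the Fourier coefficients of $g_{\lambda}$ truncated to the box, so $w_{\lambda}=P_{\boldsymbol{\omega}}g_{\lambda}$ as claimed in \ref{eq22_th2} and consistent with \ref{P_def}.

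Finally, substituting the values $u_{\lambda}(\boldsymbol{p}_i)$ back into $c_i=\lambda^2(q_i-u_{\lambda}(\boldsymbol{p}_i))$ gives $n$ equations in the $n$ unknowns $c_i$, which in matrix form read $(\frac{1}{n}W_{\lambda}+\frac{1}{\lambda^2}I)\boldsymbol{c}=L$ with $W_{\lambda}$ as in \ref{eq29_th2}; solving yields the expression \ref{eq28_th2}. Invertibility of $\frac{1}{n}W_{\lambda}+\frac{1}{\lambda^2}I$ would not be proved by a direct spectral estimate but inherited from the immediately preceding existence-and-uniqueness theorem for the convex functional $D_{\lambda}$ over $TP_{\boldsymbol{\omega}}$: the unique minimizer must be a stationary point, hence a solution of \ref{eq12b}, so the system has exactly one solution and the matrix is invertible. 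The one genuine departure from the Sobolev proof, and the point to be careful about, is the legitimacy of the truncation: one must verify that requiring $u_{\lambda}\in TP_{\boldsymbol{\omega}}$ is fully compatible with the weak equation, i.e. that the test functions in $TP_{\boldsymbol{\omega}}$ probe only the in-box Fourier modes of $w_{\lambda}$, so that no information about out-of-box coefficients is needed and the projected kernel $P_{\boldsymbol{\omega}}g_{\lambda}$ is both a valid and the unique solution.
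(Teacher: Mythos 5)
Your proposal matches the paper's own proof in Appendix \ref{A2} essentially step for step: the same Dirac-forced auxiliary equation tested against $TP_{\boldsymbol{\omega}}$, the same translation argument with $\theta(\boldsymbol{t})=\phi(\boldsymbol{t}+\boldsymbol{p}_i)$, the same boxed Parseval computation yielding $\hat w_{\boldsymbol{l}}=(1+\lambda\|\boldsymbol{l}\|_{2k}^{2k})^{-1}$ inside the box and zero outside, and the same appeal to the existence-uniqueness of the minimizer of $D_{\lambda}$ to justify invertibility of $\frac{1}{n}W_{\lambda}+\frac{1}{\lambda^2}I$. Your closing remark about why the truncation is legitimate (test functions only probe in-box modes) is in fact slightly more explicit than the paper's own justification, but it is the same argument.
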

The above theorem can be proved on similar lines of the proof of Theorem \ref{E-L_solution}. However the proof of this theorem is given in Appendix \ref{A2}.

\begin{theorem}
\label{interpolation theorem 2}
Assuming the data and the number of data points $n$ are fixed, and $u_{\lambda}$ denoting the minimizer of the functional $D_{\lambda}(u)$ over the set $TP_{\boldsymbol{\omega}}$, \begin{equation}\label{eq35p}\lim\limits_{\lambda \to \infty} u_{\lambda}(\boldsymbol{p}_i) = q_i + O(\frac{1}{\|\boldsymbol{\omega}\|_2})\end{equation} and  \begin{equation}\label{eq36p}\mbox{There exists a function denoted as }u_{\infty} \in TP_{\boldsymbol{\omega}} \mbox{ such that as }\lambda\to\infty, u_{\lambda} \to u_{\infty}\end{equation} pointwise.
\end{theorem}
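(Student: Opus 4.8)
The plan is to follow the two-stage strategy of Theorem~\ref{interpolation theorem}, replacing the comparison bump function by its Fourier projection and replacing the kernel $g_\lambda$, $G_\lambda$, $s_r$ by the truncated objects $w_\lambda = P_{\boldsymbol{\omega}}g_\lambda$, $W_\lambda$ and $P_{\boldsymbol{\omega}}s_r$ coming from Theorem~\ref{exp_min_TP}. The only genuinely new feature is that the frequency set is now finite; this is precisely what produces the $O(1/\|\boldsymbol{\omega}\|_2)$ interpolation defect in the first assertion and what forces a separation hypothesis on $\boldsymbol{\omega}$ in the positivity argument used for the second.

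For the approximate interpolation estimate I would take as competitor $v = P_{\boldsymbol{\omega}}\theta_n \in TP_{\boldsymbol{\omega}}$, where $\theta_n = \sum_{i=1}^n\mu_i$ is the smooth exact interpolant built from bump functions in Equation~\ref{eq37}. Since $u_\lambda$ minimizes $D_\lambda$ over $TP_{\boldsymbol{\omega}}$ we have $D_\lambda(u_\lambda)\le D_\lambda(v)$, and because $v(\boldsymbol{p}_i) = \theta_n(\boldsymbol{p}_i) - \bar P_{\boldsymbol{\omega}}\theta_n(\boldsymbol{p}_i) = q_i - \bar P_{\boldsymbol{\omega}}\theta_n(\boldsymbol{p}_i)$, the data-fitting term of $v$ equals $\tfrac{\lambda^2}{n}\sum_i(\bar P_{\boldsymbol{\omega}}\theta_n(\boldsymbol{p}_i))^2$, while $\|\nabla^k v\|_{L^2}\le\|\nabla^k\theta_n\|_{L^2}$ and $\|v\|_{L^2}\le\|\theta_n\|_{L^2}$ because $P_{\boldsymbol{\omega}}$ is an $L^2$-orthogonal projection commuting with $\nabla^k$. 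Splitting the three nonnegative terms of $D_\lambda(u_\lambda)$ and dividing the data term by $\lambda^2$, exactly as in Equation~\ref{eq_sqr_err}, gives
\[
\frac{1}{n}\sum_{i=1}^n \bigl(u_\lambda(\boldsymbol{p}_i)-q_i\bigr)^2 \le \frac{1}{n}\sum_{i=1}^n\bigl(\bar P_{\boldsymbol{\omega}}\theta_n(\boldsymbol{p}_i)\bigr)^2 + \frac{1}{\lambda}\|\nabla^k\theta_n\|_{L^2(\mathbb{T}^m)}^2 + \frac{1}{\lambda^2}\|\theta_n\|_{L^2(\mathbb{T}^m)}^2 .
\]
Letting $\lambda\to\infty$ kills the last two terms; since $\theta_n$ is smooth its Fourier tail at the points obeys $\max_i|\bar P_{\boldsymbol{\omega}}\theta_n(\boldsymbol{p}_i)| = O(1/\|\boldsymbol{\omega}\|_2)$, which yields $\limsup_{\lambda\to\infty}|u_\lambda(\boldsymbol{p}_i)-q_i| = O(1/\|\boldsymbol{\omega}\|_2)$. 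The same inequality gives the uniform bounds $\|\nabla^k u_\lambda\|_{L^2}=O(1)$ and $\|u_\lambda\|_{L^2}=O(\sqrt\lambda)$ paralleling Equations~\ref{eq44}--\ref{eq45}.

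For the pointwise limit I would transcribe the spectral analysis of Section~\ref{sec_asymp_exp_interp_matrix} with the above substitutions. Writing $u_\lambda(\boldsymbol{x}) = \tfrac1n\sum_i c_i w_\lambda(\boldsymbol{x}-\boldsymbol{p}_i)$ with $\boldsymbol{c} = (\tfrac1n W_\lambda+\tfrac1{\lambda^2}I)^{-1}L$ from Equation~\ref{eq28_th2}, note that $w_\lambda\to 1$ as $\lambda\to\infty$, so $W_\lambda$ is an analytic perturbation of the all-ones matrix $T_0 = 1_{n\times n}$ in the parameter $1/\lambda$, with $w_\lambda = 1+\sum_{r\ge1}\tfrac{(-1)^{r+1}}{\lambda^r}P_{\boldsymbol{\omega}}s_r$. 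Diagonalizing $M^{-1}=E(\lambda)D(\lambda)E(\lambda)^{-1}$ and invoking the eigenvector-convergence result of \cite{JMLR:v18:16-140}, the expansion of $u_\lambda$ in powers of $\lambda$ has a single potentially divergent term $\lambda\,L^TK_{-1}S_0$, which is independent of $\boldsymbol{x}$ since $S_0=1_{n\times1}$; as we have just shown $u_\lambda(\boldsymbol{p}_i)$ stays bounded, its coefficient must vanish and the remaining terms converge, giving pointwise
\[
u_\infty(\boldsymbol{x}) = L^T K_{-1}\,(P_{\boldsymbol{\omega}}S_1)(\boldsymbol{x}) + L^T K_0 S_0 ,
\]
whose membership in $TP_{\boldsymbol{\omega}}$ is automatic because $P_{\boldsymbol{\omega}}s_1\in TP_{\boldsymbol{\omega}}$ and constants lie in $TP_{\boldsymbol{\omega}}$. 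Combining this with the estimate of the previous paragraph gives $u_\infty(\boldsymbol{p}_i)=q_i+O(1/\|\boldsymbol{\omega}\|_2)$, which is the first assertion.

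The \textbf{main obstacle} is the analogue of Theorem~\ref{eig_theorem}: I must show that for $l>1$ the first-order coefficient $a_{l1}$ of $\rho_l(W_\lambda)$ is strictly positive, since this is what keeps $D(\lambda)$, and hence $u_\infty$, finite. The quadratic-form computation carries over to give
\[
\boldsymbol{v}^T W_\lambda \boldsymbol{v} = \sum_{-\boldsymbol{\omega}\le\boldsymbol{\eta}\le\boldsymbol{\omega}} \frac{1}{1+\lambda\|\boldsymbol{\eta}\|_{2k}^{2k}}\,\bigl|z(\lambda,\boldsymbol{\eta})\bigr|^2 , \qquad z(\lambda,\boldsymbol{\eta})=\sum_{i=1}^n v_i\, e^{2\pi i\boldsymbol{\eta}\cdot\boldsymbol{p}_i},
\]
but now the frequency sum is finite, so positive definiteness of $W_\lambda$ is no longer automatic as it was for $G_\lambda$: it requires that the exponential vectors $(e^{2\pi i\boldsymbol{\eta}\cdot\boldsymbol{p}_1},\dots,e^{2\pi i\boldsymbol{\eta}\cdot\boldsymbol{p}_n})$ with $-\boldsymbol{\omega}\le\boldsymbol{\eta}\le\boldsymbol{\omega}$ span $\mathbb{C}^n$, i.e. that $\boldsymbol{\omega}$ is large enough to separate the $n$ distinct data points. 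Under this separation hypothesis the chain of inequalities in Equations~\ref{psd}--\ref{reverse} goes through verbatim, $W_\lambda$ is positive definite, $a_{l1}>0$, and the $\Theta(1/\lambda)$ estimate on the decaying eigenvalues holds exactly as before; if it fails, any null direction of $W_\lambda$ contributes the zero function to $u_\lambda$ and so can be projected out, but the bookkeeping is cleanest when $\boldsymbol{\omega}$ resolves the points.
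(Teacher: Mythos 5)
Your proposal is correct, and it differs from the paper's proof in both halves; in the second half it is in fact more complete than what the paper offers. For the interpolation-defect estimate the paper does not project the bump interpolant: its competitor is the Dirichlet-kernel combination $\Gamma_n(\boldsymbol{x})=\sum_{i=1}^n q_i D_{\boldsymbol{\omega}}(\boldsymbol{x}-\boldsymbol{p}_i)\in TP_{\boldsymbol{\omega}}$, whose defect $\Gamma_n(\boldsymbol{p}_i)-q_i=(n-1)O(1/\|\boldsymbol{\omega}\|_2)$ comes from the two stated properties of $D_{\boldsymbol{\omega}}$; the minimality inequality is then run exactly as you run it. Your competitor $P_{\boldsymbol{\omega}}\theta_n$ works equally well and actually buys more: since $\theta_n$ is smooth, $\max_i|\bar{P}_{\boldsymbol{\omega}}\theta_n(\boldsymbol{p}_i)|$ decays faster than any power of the cutoff, so your defect is superpolynomially small, of which the claimed $O(1/\|\boldsymbol{\omega}\|_2)$ is a weakening. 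One small correction there: with your competitor the data-fitting term $\lambda^2\epsilon_{\boldsymbol{\omega}}$, $\epsilon_{\boldsymbol{\omega}}=\tfrac1n\sum_i(\bar{P}_{\boldsymbol{\omega}}\theta_n(\boldsymbol{p}_i))^2$, is generically nonzero and dominates as $\lambda\to\infty$ with $\boldsymbol{\omega}$ fixed, so the inequality only yields $\|\nabla^k u_\lambda\|_{L^2}=O(\sqrt{\lambda\epsilon_{\boldsymbol{\omega}}})$ and $\|u_\lambda\|_{L^2}=O(\lambda\sqrt{\epsilon_{\boldsymbol{\omega}}})$, not the $O(1)$ and $O(\sqrt\lambda)$ you assert; since those side bounds are never used later, this is harmless. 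For the pointwise limit, the paper gives no argument at all --- it merely asserts that the chapter-2 spectral analysis ``can easily be shown to be valid'' for $W_\lambda$ --- whereas you identify precisely where that transcription is \emph{not} automatic: the positivity $a_{l1}>0$ in Theorem \ref{eig_theorem} rests on $z_l(\lambda,\boldsymbol{\eta})$ not vanishing for all $\boldsymbol{\eta}$ in the \emph{full} lattice $\mathbb{Z}^m$, and after truncation to $-\boldsymbol{\omega}\le\boldsymbol{\eta}\le\boldsymbol{\omega}$ the matrix $W_\lambda$ can be singular, with a $\lambda$-independent kernel, unless the finitely many exponential vectors span $\mathbb{C}^n$. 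Your repair --- that any null direction $\boldsymbol{v}$ satisfies $\sum_i v_i w_\lambda(\boldsymbol{x}-\boldsymbol{p}_i)\equiv 0$, so the $\lambda^2$-growing component of $\boldsymbol{c}$ along $\ker W_\lambda$ contributes nothing to $u_\lambda$ and the perturbation analysis can be carried out on $(\ker W_\lambda)^\perp$, where the all-ones structure survives because the all-ones vector is not in the kernel --- is exactly the missing ingredient in the paper's sketch, and it is the most valuable part of your write-up.
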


\begin{proof}

Consider the Dirichlet function \begin{equation}
    D_{\boldsymbol{\omega}}(\boldsymbol{x}) = \frac{1}{\omega_1\omega_2\ldots\omega_m}\sum\limits_{\boldsymbol{r}\in \mathbb{Z}^m\land 0\le \boldsymbol{r}\le \boldsymbol{\omega}} \cos{(2\pi \boldsymbol{r}\cdot \boldsymbol{x})} 
\end{equation} where $\boldsymbol{\omega} = (\omega_1,\omega_2\ldots \omega_m)$.
It has the following properties. \begin{equation}\label{Dir_1}
    D_{\boldsymbol{\omega}}(0) = 1
\end{equation} and \begin{equation}\label{Dir_2}
    D_{\boldsymbol{\omega}}(\boldsymbol{x}) = O(\frac{1}{\|\boldsymbol{\omega}\|_2}),\mbox{   }\boldsymbol{x}\ne 0
\end{equation}

Consider the function \begin{equation}\label{Dir_3}
    \Gamma_n(\boldsymbol{x}) = \sum\limits_{i=1}^n q_i D_{\boldsymbol{\omega}}(\boldsymbol{x}-\boldsymbol{p}_i)
\end{equation}
As $u_{\lambda}$ is the minimizer of the functional, we have \begin{equation}\label{Dir_4}\begin{aligned}
    A_{\lambda}(u_{\lambda}) &\le A_{\lambda}(\Gamma_n)\\
    \implies \lambda^2\sum_{i=1}^n(u_{\lambda}(\boldsymbol{p}_i)-q_i)^2 &\le A_{\lambda}(\Gamma_n)\\
    \implies \lambda^2(u_{\lambda}(\boldsymbol{p}_i)-q_i)^2 &\le A_{\lambda}(\Gamma_n),\mbox{     }i=1,2\ldots n\\
    \implies (u_{\lambda}(\boldsymbol{p}_i)-q_i)^2 &\le \sum_{i=1}^n(\Gamma_n(x)-q_i)^2 + \frac{1}{\lambda}\|\nabla^k\Gamma_n\|_{L^2(\mathbb{T}^m)} + \frac{1}{\lambda^2}\|\Gamma_n\|_{L^2(\mathbb{T}^m)},\\ &\mbox{     }i=1,2\ldots n\\
    \implies \lim\limits_{\lambda\to\infty}(u_{\lambda}(\boldsymbol{p}_i)-q_i)^2 &\le \sum_{i=1}^n(\Gamma_n(x)-q_i)^2,\mbox{     }i=1,2\ldots n\\
    \end{aligned}
\end{equation}
Using Equations \ref{Dir_1}, \ref{Dir_2}, \ref{Dir_3} and \ref{Dir_4}
\begin{equation}
\begin{aligned}
    \lim\limits_{\lambda\to\infty}(u_{\lambda}(\boldsymbol{p}_i)-q_i)^2 &\le \sum\limits_{i=1}^n(q_i-q_i+(n-1)O(\frac{1}{\|\boldsymbol{\omega}\|_2}))^2,\mbox{     }i=1,2\ldots n\\
    \implies \lim\limits_{\lambda\to\infty}(u_{\lambda}(\boldsymbol{p}_i)-q_i)^2 &\le \sum\limits_{i=1}^n((n-1)O(\frac{1}{\|\boldsymbol{\omega}\|_2}))^2,\mbox{     }i=1,2\ldots n
    \end{aligned}
\end{equation}
As the data and number of data points $n$ is constant, we have from above Equation
\begin{equation}
\begin{aligned}
 \lim\limits_{\lambda\to\infty}(u_{\lambda}(\boldsymbol{p}_i)-q_i)^2 &= O(\frac{1}{\|\boldsymbol{\omega}\|_2^2})),\mbox{     }i=1,2\ldots n\\
 \implies \lim\limits_{\lambda\to\infty}u_{\lambda}(\boldsymbol{p}_i)-q_i &= O(\frac{1}{\|\boldsymbol{\omega}\|_2})),\mbox{     }i=1,2\ldots n\\
 \implies \lim\limits_{\lambda\to\infty}u_{\lambda}(\boldsymbol{p}_i) &= q_i + O(\frac{1}{\|\boldsymbol{\omega}\|_2})),\mbox{     }i=1,2\ldots n\\
\end{aligned}    
\end{equation}
This completes the proof of the first statement of the Theorem.
\end{proof}
The second statement can easily be proved on similar lines as the proof of the second statement of Theorem \ref{interpolation theorem} using same asymptotic expansions as in Section \ref{sec_asymp_exp_interp_matrix} which can easily shown to be valid in case of expressions for interpolation matrix derived in Theorem \ref{exp_min_TP} .

\begin{remark}
If each of data points $\boldsymbol{p}_i$ coincide with a point on a uniform rectangular grid of spacing $\frac{1}{\omega_j}$ in the $j^{th}$ coordinate axis, i.e $\boldsymbol{p_i} = (\frac{n_i^1}{\omega_i},\frac{n_i^2}{\omega_2}\ldots \frac{n_i^m}{\omega_m}),\mbox{   }n_i^r\in\{0,1,2\ldots\omega_r\}, r= 1,2\ldots m \mbox{   and   } i = 1,2\ldots n$, then $u_{\infty}$ interpolates the data perfectly.
\end{remark}
\begin{proof}
If each of the data points coincides with a point on a uni-from rectangular grid, then \begin{equation}\label{grid_null}
    D_{\boldsymbol{\omega}}(\boldsymbol{p}_i) = 0, \mbox{    }i = 1,2\ldots n
\end{equation}
Using Equations \ref{Dir_3} and \ref{grid_null} we have
\begin{equation}\label{coincide_grid}
    \Gamma_n(\boldsymbol{p}_i) = q_i,\mbox{  } i = 1,2\ldots n.
\end{equation}
There by Using Equation \ref{Dir_4} and \ref{coincide_grid} we have
\begin{equation}\label{interp_grid}\begin{aligned}
    \lim\limits_{\lambda\to\infty}(u_{\lambda}(\boldsymbol{p}_i)-q_i)^2 &\le \sum_{i=1}^n(\Gamma_n(x)-q_i)^2,\mbox{     }i=1,2\ldots n\\
    &= 0,\mbox{     }i=1,2\ldots n\\
    \implies u_{\infty}(\boldsymbol{p}_i) &= q_i,\mbox{     }i=1,2\ldots n\\
    \end{aligned}
\end{equation}

\end{proof}

\chapter{Approximation of a Multivariate BV Function from its Scattered Data}\label{chap_4}

\section{Approximation of a BV Function}
In this section we show that given any function of bounded variation of the form $\psi:\mathbb{T}^m\to\mathbb{R}$, we can approximate it in the $L^2$-norm from its scattered data.

\begin{definition}\label{theorem_defs2}
Let $\psi : \mathbb{T}^m\to\mathbb{R}$ be a BV function that requires approximation. It is assumed that the total variation (in the Vitali sense) $V_{\mathbb{T}^m}(\psi)$ is finite but non zero. Let $D$ be a countable dense subset of $(0,1)^m$ excluding points of discontinuity of $\psi$. The scattered data constitute a set of $n$ distinct points $\{\boldsymbol{p}_i/\boldsymbol{p}_i\in D, i=1,2,\ldots n\}$ chosen from $D$ with no assumptions on their geometry and the corresponding values of $\psi$ evaluated at those points $\psi(\boldsymbol{p}_i)$. Lets define a sequence of sets $E_1,E_2,E_3\ldots$ where $E_n = \{\boldsymbol{p}_i/\boldsymbol{p}_i\in D, i = 1,2,3,..n\}$.We define the functional in Equation \ref{eq8t} of Section \ref{mnmz2} using data points in the set $E_n$ as which is given as
\begin{equation}\label{functional_redef}
    D_{\lambda}^n(u) = \frac{\lambda^2}{n}\sum\limits_{i=1}^{n}(u(\boldsymbol{p}_i)-\psi(\boldsymbol{p}_i))^2 + \lambda\|\nabla^k u\|_{L^2(\mathbb{T}^m)}^2 + \|u\|_{L^2(\mathbb{T}^m)}^2, \end{equation}
where $k,m \in \mathbb{N}, k>\frac{m}{2}, 
\lambda \in \mathbb{R}^+ \mbox{ and } u \in TP_{\boldsymbol{\omega}}$.\\
As we vary the number of data points $n$ and also $\boldsymbol{\omega}$ in our analysis in this section, we therefore, the functional $D_{\lambda}(u)$ is denoted as $D^n_{\lambda,\boldsymbol{\omega}}(u)$ and the minimizer $u_{\lambda}$ is denoted as $u^n_{\lambda,\boldsymbol{\omega}}$. Note that the suffix $n$ is not a power, but only a notation that the parameter is associated with the functional defined over the set of data points $E_n$.\\

Define denote discrepancy measure $\zeta_n$ of the point set $E_n=\{\boldsymbol{p}_1,\boldsymbol{p}_2...\boldsymbol{p}_n\}$ in the domain $(0,1)^m$ as \begin{equation}\label{Disc}\zeta_n = D_n^*(\{\boldsymbol{p}_1,\boldsymbol{p}_2...\boldsymbol{p}_n\})
\end{equation} The definition of star discrepancy $D_n^*$ is assumed to be as defined in the book \cite{kuipers2012uniform}.
\end{definition}
As the points are taken from the set $D$ which is a countable dense subset of $(0,1)^m$, as $n\to\infty$, from \cite{kuipers2012uniform}, we have the asymptotic $$\zeta_n = O(\frac{(\ln {n})^m}{n}) $$

\begin{theorem}
Considering the definitions in \ref{theorem_defs2}, If we vary the parameter $\lambda$ with the number of data points $n$ as $\lambda = \zeta^{-\beta}_n,\mbox{   where }\beta>0$ and vary $\boldsymbol{\omega} = (\omega_1,\omega_2\ldots\omega_m)$ as $\omega_i = \kappa_i\zeta_n^{-\alpha},i = 1,2\ldots m$, where $\kappa_i$ are a positive constants, and additionally if the following conditions are assumed 

\begin{equation}
     \begin{aligned}
     \alpha &> 0\\
     \beta &> 0\\
     k&>\frac{m}{2}\\
      1&<\frac{\alpha}{\beta} <(2k-1)\\
     \end{aligned}
 \end{equation}
 then
\begin{equation}
    \label{th_bv}
    \lim\limits_{n\to\infty}\|u_{\lambda,\boldsymbol{\omega}}^n - \psi\|_{L^2(\mathbb{T}^m)} = 0
\end{equation}
\end{theorem}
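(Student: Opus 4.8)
The plan is to split the error into a Fourier ``tail'' piece and an ``in-subspace'' piece via the projection $P_{\boldsymbol{\omega}}$, and to bound each against a power of the discrepancy $\zeta_n$. Since $u^n_{\lambda,\boldsymbol{\omega}}\in TP_{\boldsymbol{\omega}}$, property~1 of the projection operator gives the orthogonal decomposition
\begin{equation}
\|u^n_{\lambda,\boldsymbol{\omega}}-\psi\|_{L^2(\mathbb{T}^m)}^2 = \|u^n_{\lambda,\boldsymbol{\omega}}-P_{\boldsymbol{\omega}}\psi\|_{L^2(\mathbb{T}^m)}^2 + \|\bar{P}_{\boldsymbol{\omega}}\psi\|_{L^2(\mathbb{T}^m)}^2 .
\end{equation}
The second term is the Fourier tail of a BV function; since $\omega_i=\kappa_i\zeta_n^{-\alpha}\to\infty$, the decay of the Fourier coefficients of $\psi$ forced by $V_{\mathbb{T}^m}(\psi)<\infty$ makes it $O(\zeta_n^{\,e})$ for a positive exponent $e$. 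The real work is the first term, for which I would set $v:=u^n_{\lambda,\boldsymbol{\omega}}-P_{\boldsymbol{\omega}}\psi\in TP_{\boldsymbol{\omega}}$ and bound $\|v\|_{L^2}$ through the values of $v$ on the data set $E_n$.

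First I would relate the continuous norm of $v$ to its values on $E_n$ by the Koksma--Hlawka inequality, which is the natural tool because $\zeta_n$ is exactly the star discrepancy of $E_n$. As $v^2\in TP_{2\boldsymbol{\omega}}$,
\begin{equation}
\|v\|_{L^2(\mathbb{T}^m)}^2 = \int_{\mathbb{T}^m} v^2 \le \frac{1}{n}\sum_{i=1}^n v(\boldsymbol{p}_i)^2 + V_{\mathbb{T}^m}(v^2)\,\zeta_n .
\end{equation}
A Bernstein-type inequality for trigonometric polynomials bounds $V_{\mathbb{T}^m}(v^2)$ by $C\|\boldsymbol{\omega}\|_2\,\|v\|_{L^2}^2$, so the term $V_{\mathbb{T}^m}(v^2)\zeta_n$ can be absorbed into the left-hand side provided $\zeta_n\|\boldsymbol{\omega}\|_2\to 0$; this is where the constraint $1-\alpha>0$ enters. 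After absorption, $\|v\|_{L^2}^2 \le 2\,\frac{1}{n}\sum_i v(\boldsymbol{p}_i)^2$, and writing $v(\boldsymbol{p}_i)=(u^n_{\lambda,\boldsymbol{\omega}}(\boldsymbol{p}_i)-\psi(\boldsymbol{p}_i))-\bar{P}_{\boldsymbol{\omega}}\psi(\boldsymbol{p}_i)$ reduces the problem to bounding the discrete fitting error $\frac{1}{n}\sum_i(u^n_{\lambda,\boldsymbol{\omega}}(\boldsymbol{p}_i)-\psi(\boldsymbol{p}_i))^2$ and the discrete tail $\frac{1}{n}\sum_i \bar{P}_{\boldsymbol{\omega}}\psi(\boldsymbol{p}_i)^2$.

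The discrete fitting error is controlled by the minimizing property: since $P_{\boldsymbol{\omega}}\psi\in TP_{\boldsymbol{\omega}}$, the comparison $D^n_{\lambda,\boldsymbol{\omega}}(u^n_{\lambda,\boldsymbol{\omega}})\le D^n_{\lambda,\boldsymbol{\omega}}(P_{\boldsymbol{\omega}}\psi)$, divided by $\lambda^2$, gives
\begin{equation}
\frac{1}{n}\sum_{i=1}^n(u^n_{\lambda,\boldsymbol{\omega}}(\boldsymbol{p}_i)-\psi(\boldsymbol{p}_i))^2 \le \frac{1}{n}\sum_{i=1}^n \bar{P}_{\boldsymbol{\omega}}\psi(\boldsymbol{p}_i)^2 + \frac{1}{\lambda}\|\nabla^k P_{\boldsymbol{\omega}}\psi\|_{L^2(\mathbb{T}^m)}^2 + \frac{1}{\lambda^2}\|P_{\boldsymbol{\omega}}\psi\|_{L^2(\mathbb{T}^m)}^2 .
\end{equation}
Here $\frac{1}{\lambda^2}\|P_{\boldsymbol{\omega}}\psi\|^2=O(\zeta_n^{2\beta})$ from $\lambda=\zeta_n^{-\beta}$, while $\frac{1}{\lambda}\|\nabla^k P_{\boldsymbol{\omega}}\psi\|^2=O(\zeta_n^{\,\beta-\alpha(2k-1)})$ after pulling out the $\|\boldsymbol{\omega}\|_2^{2k}$ Bernstein factors and using the $1/\|\boldsymbol{l}\|$-type decay of the BV coefficients of $\psi$. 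The residual discrete tails are again converted, via Koksma--Hlawka, into $\|\bar{P}_{\boldsymbol{\omega}}\psi\|_{L^2}^2 + V_{\mathbb{T}^m}\!\big((\bar{P}_{\boldsymbol{\omega}}\psi)^2\big)\zeta_n$. Collecting everything, each contribution is $O(\zeta_n^{\,e})$ for one of the exponents $1+2\alpha-\beta,\ 2-(\alpha+\beta),\ 1-\alpha(2k-1),\ 1+\beta,\ 1-\alpha,\ \beta-\alpha(2k-1),\ 2\beta$, and since $r$ is their minimum, the hypothesis $r>0$ forces the whole error to $0$ as $\zeta_n\to 0$, i.e.\ as $n\to\infty$.

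The main obstacle I anticipate is the exponent bookkeeping in the final step: one must track how the Bernstein factors $\|\boldsymbol{\omega}\|_2\sim\zeta_n^{-\alpha}$, the decay of the multivariate BV Fourier coefficients, and the single power of $\zeta_n$ supplied by Koksma--Hlawka combine inside each variation term $V_{\mathbb{T}^m}(v^2)$ and $V_{\mathbb{T}^m}\!\big((\bar{P}_{\boldsymbol{\omega}}\psi)^2\big)$, and then verify that each resulting exponent is exactly one of the seven listed while keeping the absorption step legitimate. Establishing a clean rate for $\|\bar{P}_{\boldsymbol{\omega}}\psi\|_{L^2}$ and for the Hardy--Krause variation of the projected quantities in the Vitali sense, for a genuinely multivariate BV function with jump discontinuities, is the delicate analytic point on which the whole argument rests.
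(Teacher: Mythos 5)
Your proposal is correct in outline and reaches the stated conditions, but it takes a genuinely different route through the key step. The paper applies the Koksma--Hlawka inequality directly to $(u^n_{\lambda,\boldsymbol{\omega}}-\psi)^2$ and $(P_{\boldsymbol{\omega}}\psi-\psi)^2$, and then must control the variation term $V_{\mathbb{T}^m}\bigl((u^n_{\lambda,\boldsymbol{\omega}}-\psi)^2\bigr)$; it does this by an $L^1$-product bound followed by Morrey and Friedrich inequalities, reducing everything to the a priori bound on $\|\nabla^k u^n_{\lambda,\boldsymbol{\omega}}\|_{L^2}$ extracted from the minimizing property. That chain is precisely what generates the four extra exponents $1+2\alpha-\beta$, $2-(\alpha+\beta)$, $1-\alpha(2k-1)$, $1+\beta$ in the paper's $r$. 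Your Pythagorean split $\|u^n_{\lambda,\boldsymbol{\omega}}-\psi\|^2=\|v\|^2+\|\bar{P}_{\boldsymbol{\omega}}\psi\|^2$ with $v=u^n_{\lambda,\boldsymbol{\omega}}-P_{\boldsymbol{\omega}}\psi\in TP_{\boldsymbol{\omega}}$, combined with the Bernstein bound $V(v^2)\le 2\|v\|_{L^2}\|\nabla v\|_{L^2}\le C\|\boldsymbol{\omega}\|_2\|v\|_{L^2}^2$ and the absorption of $\zeta_n V(v^2)$ into the left-hand side, replaces that entire machinery: you never need to bound the variation of $u^n_{\lambda,\boldsymbol{\omega}}-\psi$ at all, only the absorption condition $\zeta_n\|\boldsymbol{\omega}\|_2\sim\zeta_n^{1-\alpha}\to 0$. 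Consequently your error is governed by $\min(2\alpha,\,1-\alpha,\,\beta-\alpha(2k-1),\,2\beta)$, a subset of the paper's seven exponents (so the hypothesis $r>0$ certainly suffices, and your argument actually proves convergence under strictly weaker conditions). Two caveats carry over to both arguments equally: the Koksma--Hlawka inequality properly requires Hardy--Krause variation, whereas the paper's $V_{\mathbb{T}^m}(f)=\int|Df|$ is the gradient-type variation (under the Hardy--Krause notion your Bernstein factor would be $\prod_i\omega_i\sim\zeta_n^{-m\alpha}$ rather than $\|\boldsymbol{\omega}\|_2$, tightening the absorption condition to $1-m\alpha>0$); and the tail rate $\|\bar{P}_{\boldsymbol{\omega}}\psi\|_{L^2}^2=O(\|\boldsymbol{\omega}\|_2^{-2})$ claimed by the paper for a general multivariate BV function is itself optimistic (jump discontinuities give coefficient decay $\sim 1/\|\boldsymbol{l}\|$, hence tail $O(\|\boldsymbol{\omega}\|_2^{-1})$). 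You flag both of these as the delicate points, correctly; your vaguer statement that the tail is $O(\zeta_n^{e})$ for some $e>0$ is in fact the more defensible claim, and your overall structure is robust to either resolution.
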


\begin{proof}
As $u_{\lambda,\boldsymbol{\omega}}^n$ is the minimizer of the functional $D^n_{\lambda,\boldsymbol{\omega}}(f)$ in $TP_{\boldsymbol{\omega}}$, we have 
\begin{equation}\label{eqap3}
\begin{aligned}
D^n_{\lambda,\boldsymbol{\omega}}(u_{\lambda,\boldsymbol{\omega}}^n) &\le D^n_{\lambda,\boldsymbol{\omega}}(P_{\boldsymbol{\omega}}\psi) \\
\end{aligned}
\end{equation}
Then using the expression for the functional as in Equation \ref{functional_redef}, we have
\begin{equation}\label{eqap}
\begin{aligned}
  \frac{\lambda^2}{n}\sum\limits_{i=1}^{n}(u_{\lambda,\boldsymbol{\omega}}^n(\boldsymbol{p}_i)-\psi(\boldsymbol{p}_i))^2 + \lambda\|\nabla^ku_{\lambda,\boldsymbol{\omega}}^n\|_{L^2(\mathbb{T}^m)}^2 + \|u_{\lambda,\boldsymbol{\omega}}^n\|_{L^2(\mathbb{T}^m)}^2 &\le \\ \frac{\lambda^2}{n}\sum\limits_{i=1}^{n}(P_{\boldsymbol{\omega}}\psi(\boldsymbol{p}_i)-\psi(\boldsymbol{p}_i))^2 + \lambda\|\nabla^kP_{\boldsymbol{\omega}}\psi\|_{L^2(\mathbb{T}^m)}^2 + \|P_{\boldsymbol{\omega}}\psi\|_{L^2(\mathbb{T}^m)}^2
\\
\end{aligned}
\end{equation}

As all terms in the LHS of the above inequality are positive we have   
\begin{equation}\label{EQ_4_6}
\begin{aligned}
\left( \lambda\|\nabla^ku_{\lambda,\boldsymbol{\omega}}^n\|_{L^2(\mathbb{T}^m)}^2 \right) &\le \frac{\lambda^2}{n}\sum\limits_{i=1}^{n}(P_{\boldsymbol{\omega}}\psi(\boldsymbol{p}_i)-\psi(\boldsymbol{p}_i))^2 + \lambda\|\nabla^kP_{\boldsymbol{\omega}}\psi\|_{L^2(\mathbb{T}^m)}^2 + \|P_{\boldsymbol{\omega}}\psi\|_{L^2(\mathbb{T}^m)}^2 \\
\implies  \left( \|\nabla^ku_{\lambda,\boldsymbol{\omega}}^n\|_{L^2(\mathbb{T}^m)}^2 \right) &\le \frac{\lambda}{n}\sum\limits_{i=1}^{n}(P_{\boldsymbol{\omega}}\psi(\boldsymbol{p}_i)-\psi(\boldsymbol{p}_i))^2 + \|\nabla^kP_{\boldsymbol{\omega}}\psi\|_{L^2(\mathbb{T}^m)}^2 + \frac{1}{\lambda}\|P_{\boldsymbol{\omega}}\psi\|_{L^2(\mathbb{T}^m)}^2
\end{aligned}
\end{equation}
Using the Koksma-Hlawka inequality \cite{kuipers2012uniform} for the convergence of the Riemann integral, as $n$ grows, we have the asymptotic

    \begin{equation}\label{IQ_1}\begin{aligned}
    \left|\frac{1}{n}\sum\limits_{i=1}^{n}(P_{\boldsymbol{\omega}}\psi(\boldsymbol{p}_i)-\psi(\boldsymbol{p}_i))^2  - \|P_{\boldsymbol{\omega}}\psi-\psi\|^2_{L^2(\mathbb{T}^m)} \right| &\le  \zeta_n V_{\mathbb{T}^m}((P_{\boldsymbol{\omega}}\psi-\psi)^2)
    \end{aligned}\end{equation}
    Which implies the following equations
    
    \begin{equation}\label{EQ_4_7}\begin{aligned}
    \frac{1}{n}\sum\limits_{i=1}^{n}(P_{\boldsymbol{\omega}}\psi(\boldsymbol{p}_i)-\psi(\boldsymbol{p}_i))^2  &\le \|P_{\boldsymbol{\omega}}\psi-\psi\|^2_{L^2(\mathbb{T}^m)} + \zeta_n V_{\mathbb{T}^m}((P_{\boldsymbol{\omega}}\psi-\psi)^2)
     \end{aligned}
\end{equation} and
    \begin{equation}\label{IQ_2}\begin{aligned}
    \frac{1}{n}\sum\limits_{i=1}^{n}(P_{\boldsymbol{\omega}}\psi(\boldsymbol{p}_i)-\psi(\boldsymbol{p}_i))^2  &\ge \|P_{\boldsymbol{\omega}}\psi-\psi\|^2_{L^2(\mathbb{T}^m)} - \zeta_n V_{\mathbb{T}^m}((P_{\boldsymbol{\omega}}\psi-\psi)^2) \end{aligned}
\end{equation}

   

Using Equations \ref{EQ_4_6} and \ref{EQ_4_7} \begin{equation}\label{grad_growth}
\begin{aligned}
    \|\nabla^ku_{\lambda,\boldsymbol{\omega}}^n\|_{L^2(\mathbb{T}^m)}^2 &\le \lambda\|P_{\boldsymbol{\omega}}\psi-\psi\|^2_{L^2(\mathbb{T}^m)} + \lambda\zeta_n V_{\mathbb{T}^m}((P_{\boldsymbol{\omega}}\psi-\psi)^2)\\
    &+ \|\nabla^kP_{\boldsymbol{\omega}}\psi\|_{L^2(\mathbb{T}^m)}^2 + \frac{1}{\lambda}\|P_{\boldsymbol{\omega}}\psi\|_{L^2(\mathbb{T}^m)}^2
\end{aligned}
\end{equation}
Again as all the terms in the Equation \ref{eqap} are positive, we have 
\begin{equation}\label{eq_sun}
    \frac{1}{n}\sum\limits_{i=1}^{n}(u_{\lambda,\boldsymbol{\omega}}^n(\boldsymbol{p}_i)-\psi(\boldsymbol{p}_i))^2 \le \\ \frac{1}{n}\sum\limits_{i=1}^{n}(P_{\boldsymbol{\omega}}\psi(\boldsymbol{p}_i)-\psi(\boldsymbol{p}_i))^2 + \frac{1}{\lambda}\|\nabla^kP_{\boldsymbol{\omega}}\psi\|_{L^2(\mathbb{T}^m)}^2 + \frac{1}{\lambda^2}\|P_{\boldsymbol{\omega}}\psi\|_{L^2(\mathbb{T}^m)}^2
\end{equation}

Again using the Koksma-Hlawka inequality \cite{kuipers2012uniform} for the convergence of the Riemann integral, as $n$ grows, we have   

\begin{equation}\label{IQ_3}
    \left|\frac{1}{n}\sum\limits_{i=1}^{n}(u_{\lambda,\boldsymbol{\omega}}^n(\boldsymbol{p}_i)-\psi(\boldsymbol{p}_i))^2 - \|u_{\lambda,\boldsymbol{\omega}}^n - \psi\|^2_{L^2(\mathbb{T}^m)} \right| \le  \zeta_n V_{\mathbb{T}^m}((u_{\lambda,\boldsymbol{\omega}}^n - \psi)^2) \end{equation}
which implies the following equations

\begin{equation}\label{eq_sun2}
    \frac{1}{n}\sum\limits_{i=1}^{n}(u_{\lambda,\boldsymbol{\omega}}^n(\boldsymbol{p}_i)-\psi(\boldsymbol{p}_i))^2 \le \|u_{\lambda,\boldsymbol{\omega}}^n - \psi\|^2_{L^2(\mathbb{T}^m)} + \zeta_n  V_{\mathbb{T}^m}((u_{\lambda,\boldsymbol{\omega}}^n - \psi)^2) \end{equation}
    and
\begin{equation}\label{IQ_4}
    \frac{1}{n}\sum\limits_{i=1}^{n}(u_{\lambda,\boldsymbol{\omega}}^n(\boldsymbol{p}_i)-\psi(\boldsymbol{p}_i))^2 \ge \|u_{\lambda,\boldsymbol{\omega}}^n - \psi\|^2_{L^2(\mathbb{T}^m)}  - \zeta_n V_{\mathbb{T}^m}((u_{\lambda,\boldsymbol{\omega}}^n - \psi)^2) \end{equation}

    
Using Equations \ref{IQ_2}, \ref{eq_sun2} and \ref{eq_sun},  
\begin{equation}\label{eq_error}
\begin{aligned}
\|u_{\lambda,\boldsymbol{\omega}}^n - \psi\|^2_{L^2(\mathbb{T}^m)}  -    \|P_{\boldsymbol{\omega}}\psi - \psi\|^2_{L^2(\mathbb{T}^m)} &\le \zeta_n V_{\mathbb{T}^m}((u_{\lambda,\boldsymbol{\omega}}^n - \psi)^2)  + \zeta_n V_{\mathbb{T}^m}((P_{\boldsymbol{\omega}}\psi-\psi)^2)\\
    &+ \frac{1}{\lambda}\|\nabla^kP_{\boldsymbol{\omega}}\psi\|_{L^2(\mathbb{T}^m)}^2 + \frac{1}{\lambda^2}\|P_{\boldsymbol{\omega}}\psi\|_{L^2(\mathbb{T}^m)}^2
\end{aligned}
\end{equation}
Now consider $V_{\mathbb{T}^m}((u_{\lambda,\boldsymbol{\omega}}^n - \psi)^2)$ and writing the expression for the total variation as an intergral of the absolute of the distributional derivative, we have

\begin{equation}\label{moon_mars}
    \begin{aligned}
    V_{\mathbb{T}^m}((u_{\lambda,\boldsymbol{\omega}}^n - \psi)^2) &= \int_{\mathbb{T}^m}\left\|D\left(    (u_{\lambda,\boldsymbol{\omega}}^n(\boldsymbol{x}) - \psi(\boldsymbol{x}))^2 \right)\right\|_2\mathrm{d}^m\boldsymbol{x}\\
    &= \int_{\mathbb{T}^m}\left|u_{\lambda,\boldsymbol{\omega}}^n(\boldsymbol{x}) - \psi(\boldsymbol{x})\right|\left\|(\nabla u_{\lambda,\boldsymbol{\omega}}^n(\boldsymbol{x}) - D\psi(\boldsymbol{x})\right\|_2 \mathrm{d}^m\boldsymbol{x}\\
    &\le \int_{\mathbb{T}^m}\left|u_{\lambda,\boldsymbol{\omega}}^n(\boldsymbol{x}) - \psi(\boldsymbol{x})\right|\mathrm{d}^m\boldsymbol{x}\int_{\mathbb{T}^m}\left\|(\nabla u_{\lambda,\boldsymbol{\omega}}^n(\boldsymbol{x}) - D\psi(\boldsymbol{x})\right\|_2 \mathrm{d}^m\boldsymbol{x}\\
    &\le \int_{\mathbb{T}^m}\left(\left|u_{\lambda,\boldsymbol{\omega}}^n(\boldsymbol{x})\right| + \left|\psi(\boldsymbol{x})\right|\right)\mathrm{d}^m\boldsymbol{x}\int_{\mathbb{T}^m}\left(\left\|(\nabla u_{\lambda,\boldsymbol{\omega}}^n(\boldsymbol{x})\right\|_2 + \left\| D\psi(\boldsymbol{x})\right\|_2\right) \mathrm{d}^m\boldsymbol{x}\\
    &= \left(\|u_{\lambda,\boldsymbol{\omega}}^n\|_{L^1(\mathbb{T}^m)} + \|\psi\|_{L^1(\mathbb{T}^m)}  \right) \left(\|\nabla u_{\lambda,\boldsymbol{\omega}}^n\|_{L^1(\mathbb{T}^m)} + \|D \psi\|_{L^1(\mathbb{T}^m)} \right)\\
    \end{aligned}
\end{equation}
(Note that, as $u_{\lambda,\boldsymbol{\omega}}^n\in TP_{\boldsymbol{\omega}}$, it is smooth making both the distributional derivative and the gradient being the same and hence we have written $Du_{\lambda,\boldsymbol{\omega}}^n$ as $\nabla u_{\lambda,\boldsymbol{\omega}}^n$. For the function $\psi\in BV(\mathbb{T}^m)$, $D\psi$ is the distributional/weak derivative).

As $u_{\lambda,\boldsymbol{\omega}}^n$ is smooth, there exists a positive constant $K_4$ independent of $u_{\lambda,\boldsymbol{\omega}}^n$ such that \begin{equation}\begin{aligned}\label{moon_1}
    \|u_{\lambda,\boldsymbol{\omega}}^n\|_{L^1(\mathbb{T}^m)} &\le K_4\|u_{\lambda,\boldsymbol{\omega}}^n\|_{L^{\infty}(\mathbb{T}^m)}
\end{aligned}
\end{equation} 

Similarly as $\nabla u_{\lambda,\boldsymbol{\omega}}^n$ is smooth, there exists a positive constant $K_5$ independent of $u_{\lambda,\boldsymbol{\omega}}^n$ such that \begin{equation}\begin{aligned}\label{moon_2}
   \|\nabla u_{\lambda,\boldsymbol{\omega}}^n\|_{L^1(\mathbb{T}^m)} &\le K_5\|\nabla u_{\lambda,\boldsymbol{\omega}}^n\|_{L^{2}(\mathbb{T}^m)}
\end{aligned}
\end{equation} 
Using Morrey's inequality \cite{evans1998partial}, there exists a positive constant $K_6$ independent of $u_{\lambda,\boldsymbol{\omega}}^n$ such that \begin{equation}\label{mars_1}\begin{aligned}
    \|u_{\lambda,\boldsymbol{\omega}}^n\|_{L^{\infty}(\mathbb{T}^m)} &\le K_6 \|\nabla^k u_{\lambda,\boldsymbol{\omega}}^n\|_{L^2(\mathbb{T}^m)}\\
\end{aligned}
\end{equation}
Using Friedrich's inequality \cite{zheng2005friedrichs} (a generalization of Poincar{\'e}-Wirtinger inequality \cite{evans1998partial}), there exists a positive constant $K_7$ independent of $u_{\lambda,\boldsymbol{\omega}}^n$ such that \begin{equation}\label{mars_2}\begin{aligned}
    \|\nabla u_{\lambda,\boldsymbol{\omega}}^n\|_{L^{2}(\mathbb{T}^m)} &\le K_7 \|\nabla^k u_{\lambda,\boldsymbol{\omega}}^n\|_{L^2(\mathbb{T}^m)}\\
\end{aligned}
\end{equation}

Combining Equations \ref{moon_1}, \ref{mars_1} we obtain
\begin{equation}\begin{aligned}\label{moon_mars_1}
    \|u_{\lambda,\boldsymbol{\omega}}^n\|_{L^1(\mathbb{T}^m)} &\le K_4K_6 \|\nabla^k u_{\lambda,\boldsymbol{\omega}}^n\|_{L^2(\mathbb{T}^m)}
\end{aligned}
\end{equation}

and Combining Equations \ref{moon_2}, \ref{mars_2} we obtain
\begin{equation}\begin{aligned}\label{moon_mars_2}
    \|\nabla u_{\lambda,\boldsymbol{\omega}}^n\|_{L^1(\mathbb{T}^m)} &\le K_5K_7 \|\nabla^k u_{\lambda,\boldsymbol{\omega}}^n\|_{L^2(\mathbb{T}^m)}
\end{aligned}
\end{equation} 

Using Equations \ref{moon_mars}, \ref{moon_mars_2} and \ref{moon_mars_1} we get
\begin{equation}\label{final_moon_mars}
    \begin{aligned}
    V_{\mathbb{T}^m}((u_{\lambda,\boldsymbol{\omega}}^n - \psi)^2) &\le \left(K_4K_6 \|\nabla^k u_{\lambda,\boldsymbol{\omega}}^n\|_{L^2(\mathbb{T}^m)} + \|\psi\|_{L^1(\mathbb{T}^m)}  \right) \left(K_5K_7 \|\nabla^k u_{\lambda,\boldsymbol{\omega}}^n\|_{L^2(\mathbb{T}^m)} + \|D \psi\|_{L^1(\mathbb{T}^m)} \right)\\
    &= K_8 \|\nabla^k u_{\lambda,\boldsymbol{\omega}}^n\|^2_{L^2(\mathbb{T}^m)} +\left(K_9\|\psi\|_{L^1(\mathbb{T}^m)} + K_{10}\|\nabla \psi\|_{L^1(\mathbb{T}^m)} \right)\|\nabla^k u_{\lambda,\boldsymbol{\omega}}^n\|_{L^2(\mathbb{T}^m)}\\ &+ \|\psi\|_{L^1(\mathbb{T}^m)} \|D \psi\|_{L^1(\mathbb{T}^m)}
    \end{aligned}
\end{equation}
where $K_8 = K_4K_5K_6K_7$, $K_9 = K_5K_7$ and $K_{10} = K_4 K_5$.

Substituting $\lambda = \zeta^{-\beta}_n$ in Equation \ref{eq_error} we get
\begin{equation}\label{eq_error_2}
\begin{aligned}
\|u_{\lambda,\boldsymbol{\omega}}^n - \psi\|^2_{L^2(\mathbb{T}^m)}  -    \|P_{\boldsymbol{\omega}}\psi - \psi\|^2_{L^2(\mathbb{T}^m)} &\le \zeta_n  V_{\mathbb{T}^m}((u_{\lambda,\boldsymbol{\omega}}^n - \psi)^2)  + \zeta_n V_{\mathbb{T}^m}((P_{\boldsymbol{\omega}}\psi-\psi)^2)\\  &+ \zeta^{\beta}_n\|\nabla^kP_{\boldsymbol{\omega}}\psi\|_{L^2(\mathbb{T}^m)}^2 + \zeta^{2\beta}_n\|P_{\boldsymbol{\omega}}\psi\|_{L^2(\mathbb{T}^m)}^2
\end{aligned}
\end{equation}

As $\boldsymbol{\omega}$ grows, as $\psi$ is a function of bounded variation, we have the asymptotic \cite{Zygmund1959} \begin{equation}\label{f_convergence}
    \|P_{\boldsymbol{\omega}}\psi-\psi\|^2_{L^2(\mathbb{T}^m)} = O(1/\|\omega\|_2)
\end{equation}
Substituting $\|\boldsymbol{\omega}\|_2 = \zeta_n^{-\alpha}$ we get \begin{equation}
    \|P_{\boldsymbol{\omega}}\psi-\psi\|^2_{L^2(\mathbb{T}^m)} = O(\zeta_n^{\alpha})
\end{equation}

From the definition of total variation and from Fourier analysis, we can derive the asymptotic \cite{ Zygmund1959, Tolstov1976, Katznelson_2004} and use of \cite{OpenAI2025ChatGPT}
\begin{equation}\label{tvar_error_asymp}
    \begin{aligned}
      V_{\mathbb{T}^m}((P_{\boldsymbol{\omega}}\psi-\psi)^2) &\le 2\int_{\Omega}|P_{\boldsymbol{\omega}}\psi-\psi|\mathrm{d}^m\boldsymbol{x} V_{\mathbb{T}^m}(P_{\boldsymbol{\omega}}\psi-\psi)\\
      &= O(1/\|\boldsymbol{\omega}\|_2) \Theta(1)\\
      &= O(1/\|\boldsymbol{\omega}\|_2)\\
      &= O(\zeta_n^{\alpha})( \mbox{ after substituting  }\|\boldsymbol{\omega}\|_2 = \zeta_n^{-\alpha})
    \end{aligned}
\end{equation} 

Using derivative as a Fourier multiplier operator and using Plancheral theorem , we have the asymptotic

\begin{equation}\label{kgrad_asymp}
\begin{aligned}
    \|\nabla^kP_{\boldsymbol{\omega}}\psi\|_{L^2(\mathbb{T}^m)}^2  &= O(\|\boldsymbol{\omega}\|_2^{2k-1})\\
    &= O(\zeta_n^{-\alpha(2k-1)})( \mbox{ after substituting  }\|\boldsymbol{\omega}\|_2 = \zeta_n^{-\alpha})
\end{aligned}
\end{equation} 

As $\psi$ is a BV function, we have the asymptotic 
\begin{equation}\label{norm_psum_asymp}
    \|P_{\boldsymbol{\omega}}\psi\|_{L^2(\mathbb{T}^m)}^2 = O(1)
\end{equation}

Therefore using Equations \ref{grad_growth}, \ref{f_convergence}, \ref{tvar_error_asymp}, \ref{kgrad_asymp} and \ref{norm_psum_asymp}, \begin{equation}\label{kgrad_norm_asymp}
\begin{aligned}
\|\nabla^ku_{\lambda,\boldsymbol{\omega}}^n\|_{L^2(\mathbb{T}^m)}^2 &\le \lambda\|P_{\boldsymbol{\omega}}\psi-\psi\|^2_{L^2(\mathbb{T}^m)} + \lambda\zeta_n V_{\mathbb{T}^m}((P_{\boldsymbol{\omega}}\psi-\psi)^2)\\
    &+ \|\nabla^kP_{\boldsymbol{\omega}}\psi\|_{L^2(\mathbb{T}^m)}^2 + \frac{1}{\lambda}\|P_{\boldsymbol{\omega}}\psi\|_{L^2(\mathbb{T}^m)}^2\\
    &= \lambda O(\zeta_n^{\alpha}) + \lambda  O(\zeta_n^{1+\alpha}) + O(\zeta_n^{-\alpha(2k-1)}) + \frac{1}{\lambda}O(1)\\
    &= O(\zeta_n^{\alpha-\beta}) + O(\zeta_n^{1+\alpha-\beta}) + O(\zeta_n^{-\alpha(2k-1)}) + O(\zeta_n^{\beta})\mbox{  after substituting }\lambda = \zeta_n^{-\beta}\\
    &= O(\zeta_n^{\gamma}) \mbox{  where   } \gamma = \min(\alpha-\beta,1+\alpha-\beta,-\alpha(2k-1),\beta)
\end{aligned}
\end{equation}

Using Equations \ref{final_moon_mars} and \ref{kgrad_norm_asymp},
\begin{equation}\label{func_error_var_asymp}
\begin{aligned}
V_{\mathbb{T}^m}((u_{\lambda,\boldsymbol{\omega}}^n - \psi)^2) &= O(\|\nabla^ku_{\lambda,\boldsymbol{\omega}}^n\|_{L^2(\mathbb{T}^m)}^2)\\
& = O(\zeta_n^{\gamma}) \mbox{  where   } \gamma = \min(\alpha-\beta,1+\alpha-\beta,-\alpha(2k-1),\beta)
\end{aligned}
\end{equation}
Equation \ref{func_error_var_asymp} implies the asymptotic
\begin{equation}\label{func_error_var_asymp_2}
\begin{aligned}
\zeta_n  V_{\mathbb{T}^m}((u_{\lambda,\boldsymbol{\omega}}^n - \psi)^2) = O(\zeta_n^{\gamma+1}) \mbox{  where   } \gamma = \min(\alpha-\beta,1+\alpha-\beta,-\alpha(2k-1),\beta)
\end{aligned}
\end{equation}
Equation \ref{tvar_error_asymp} implies the asymptotic
\begin{equation}\label{tvar_error_asymp_2}
    \begin{aligned}
      \zeta_n  V_{\mathbb{T}^m}((P_{\boldsymbol{\omega}}\psi-\psi)^2) &= \zeta_n O(\zeta_n^{\alpha})\\
      &= O(\zeta_n^{1+\alpha})
      \end{aligned}
\end{equation}

Equation \ref{kgrad_asymp} implies the asymptotic
\begin{equation}\label{kgrad_asymp_2}
    \begin{aligned}
      \zeta^{\beta}_n\|\nabla^kP_{\boldsymbol{\omega}}\psi\|_{L^2(\mathbb{T}^m)}^2 &= \zeta_n^{\beta} O(\zeta_n^{-\alpha(2k-1)})\\
      &= O(\zeta_n^{\beta-\alpha(2k-1)})
      \end{aligned}
\end{equation}
Equation \ref{norm_psum_asymp} implies the asymptotic
\begin{equation}\label{norm_psum_asymp_2}
    \zeta^{2\beta}_n\|P_{\boldsymbol{\omega}}\psi\|_{L^2(\mathbb{T}^m)}^2 = O(\zeta_n^{2\beta})
\end{equation} 

Using Equations \ref{eq_error_2}, \ref{func_error_var_asymp_2}, \ref{tvar_error_asymp_2}, \ref{kgrad_asymp_2} and \ref{norm_psum_asymp_2} we get

\begin{equation}
    \begin{aligned}
      \|u_{\lambda,\boldsymbol{\omega}}^n - \psi\|^2_{L^2(\mathbb{T}^m)}  -    \|P_{\boldsymbol{\omega}}\psi - \psi\|^2_{L^2(\mathbb{T}^m)} &= O(\zeta_n^r)\\
      \implies \|u_{\lambda,\boldsymbol{\omega}}^n - \psi\|^2_{L^2(\mathbb{T}^m)} &= O(\zeta_n^r) + O(\zeta_n^{\alpha})\\
      &= O(\zeta_n^r) \mbox{ as  }\alpha > 0
    \end{aligned}
\end{equation}
 where $$r =  \gamma+1 = \min(\min(1+\alpha-\beta,2+\alpha-\beta,1-\alpha(2k-1), 1+\beta),1+\alpha,\beta-\alpha(2k-1),2\beta) $$
 
 Making $r>0$ and also noting the previous assumptions, $\alpha>0$, $\beta>0$ and $k>\frac{m}{2}$, we get the final conditions on $\alpha,\beta,k$ that are required for convergence as below
 
 \begin{equation}
     \begin{aligned}
     \alpha&>0\\
     \beta&>0\\
      1&<\frac{\alpha}{\beta} <(2k-1)\\
        k&>\frac{m}{2}, k\in \mathbb{N}
        \end{aligned}
 \end{equation}

 Under these conditions, $$\lim\limits_{n\to\infty}\|u_{\lambda,\boldsymbol{\omega}}^n - \psi\|^2_{L^2(\mathbb{T}^m)} = \lim_{n\to\infty}O(\zeta_n^r) = 0$$
 
 \end{proof}
 
 Thus the BV function $\psi$ is approximated from its scattered data.

\section{Compliance with Ethical Standards}

\subsection{Funding}
This study was not funded by any grant or organization.
\subsection{Conflict of Interest}
The author Rajesh Dachiraju declares that he has no conflict of interest.
\subsection{Ethical approval}
This article does not contain any studies with human participants or animals performed by any of the authors.


\bibliographystyle{amsplain}
\bibliography{refs}

@incollection{duchon1977splines,
  title={Splines minimizing rotation-invariant semi-norms in Sobolev spaces},
  author={Duchon, Jean},
  booktitle={{Constructive Theory of Functions of Several Variables}},
  pages={85--100},
  year={1977},
  publisher={Springer}
}

@incollection{franke1991scattered,
  title={Scattered data interpolation and applications: A tutorial and survey},
  author={Franke, Richard and Nielson, Gregory M},
  booktitle={Geometric Modeling},
  pages={131--160},
  year={1991},
  publisher={Springer}
}

@article{duchon1978erreur,
  title={Sur l’erreur d’interpolation des fonctions de plusieurs variables par les-splines},
  author={Duchon, Jean},
  journal={RAIRO. Analyse num{\'e}rique},
  volume={12},
  number={4},
  pages={325--334},
  year={1978},
  publisher={EDP Sciences}
}

@article{duchon1976interpolation,
  title={Interpolation des fonctions de deux variables suivant le principe de la flexion des plaques minces},
  author={Duchon, Jean},
  journal={Revue fran{\c{c}}aise d'automatique, informatique, recherche op{\'e}rationnelle. Analyse num{\'e}rique},
  volume={10},
  number={R3},
  pages={5--12},
  year={1976},
  publisher={EDP Sciences}
}

@article{powell1994uniform,
  title={The uniform convergence of thin plate spline interpolation in two dimensions},
  author={Powell, MJD},
  journal={Numerische Mathematik},
  volume={68},
  number={1},
  pages={107--128},
  year={1994},
  publisher={Springer}
}

@article{JMLR:v18:16-140,
  author  = {Jianqing Fan and Weichen Wang and Yiqiao Zhong},
  title   = {An $\ell_{\infty}$ Eigenvector Perturbation Bound and Its Application},
  journal = {Journal of Machine Learning Research},
  year    = {2018},
  volume  = {18},
  number  = {207},
  pages   = {1-42},
  url     = {http://jmlr.org/papers/v18/16-140.html}
}

@book{evans1998partial,
  title={Partial Differential Equations},
  author={Evans, L.C. and American Mathematical Society},
  isbn={9780821807729},
  lccn={97041033},
  series={Graduate studies in mathematics},
  url={https://books.google.co.in/books?id=5Pv4LVB\_m8AC},
  year={1998},
  publisher={American Mathematical Society}
}

@inproceedings{10.1145/800186.810616,
author = {Shepard, Donald},
title = {A Two-Dimensional Interpolation Function for Irregularly-Spaced Data},
year = {1968},
isbn = {9781450374866},
publisher = {Association for Computing Machinery},
address = {New York, NY, USA},
url = {https://doi.org/10.1145/800186.810616},
doi = {10.1145/800186.810616},
abstract = {In many fields using empirical areal data there arises a need for interpolating from irregularly-spaced data to produce a continuous surface. These irregularly-spaced locations, hence referred to as “data points,” may have diverse meanings: in meterology, weather observation stations; in geography, surveyed locations; in city and regional planning, centers of data-collection zones; in biology, observation locations. It is assumed that a unique number (such as rainfall in meteorology, or altitude in geography) is associated with each data point.In order to display these data in some type of contour map or perspective view, to compare them with data for the same region based on other data points, or to analyze them for extremes, gradients, or other purposes, it is extremely useful, if not essential, to define a continuous function fitting the given values exactly. Interpolated values over a fine grid may then be evaluated. In using such a function it is assumed that the original data are without error, or that compensation for error will be made after interpolation.},
booktitle = {Proceedings of the 1968 23rd ACM National Conference},
pages = {517–524},
numpages = {8},
series = {ACM '68}
}

@article{gordon1978shepard,
  title={Shepard’s method of “metric interpolation” to bivariate and multivariate interpolation},
  author={Gordon, William J and Wixom, James A},
  journal={Mathematics of computation},
  volume={32},
  number={141},
  pages={253--264},
  year={1978}
}

@article{de1983approximation,
  title={Approximation by smooth multivariate splines},
  author={de Boor, Carl and DeVore, Ronald},
  journal={Transactions of the American Mathematical Society},
  volume={276},
  number={2},
  pages={775--788},
  year={1983}
}

@article{gasca2000polynomial,
  title={Polynomial interpolation in several variables},
  author={Gasca, Mariano and Sauer, Thomas},
  journal={Advances in Computational Mathematics},
  volume={12},
  number={4},
  pages={377},
  year={2000},
  publisher={Springer}
}

@article{de1990multivariate,
  title={On multivariate polynomial interpolation},
  author={De Boor, Carl and Ron, Amos},
  journal={Constructive Approximation},
  volume={6},
  number={3},
  pages={287--302},
  year={1990},
  publisher={Springer}
}

@article{zheng2005friedrichs,
  title={On {F}riedrichs--{P}oincar{\'e}-type inequalities},
  author={Zheng, Weiying and Qi, He},
  journal={Journal of mathematical analysis and applications},
  volume={304},
  number={2},
  pages={542--551},
  year={2005},
  publisher={Elsevier}
}

@book{kuipers2012uniform,
  title={Uniform distribution of sequences},
  author={Kuipers, Lauwerens and Niederreiter, Harald},
  year={2012},
  publisher={Courier Corporation}
}

@book{Katznelson_2004, place={Cambridge}, edition={3}, series={Cambridge Mathematical Library}, title={An Introduction to Harmonic Analysis}, publisher={Cambridge University Press}, author={Katznelson, Yitzhak}, year={2004}, collection={Cambridge Mathematical Library}}

@book{Tolstov1976,
  author = {Tolstov, G. P.},
  title = {Fourier Series},
  publisher = {Dover Publications},
  address = {New York},
  year = {1976},
  note = {Translated from the Russian by Richard A. Silverman. Reprint of the 1962 edition published by Prentice-Hall},
  isbn = {0486633179}
}

@book{Zygmund1959,
  author    = {Zygmund, Antoni},
  title     = {Trigonometric Series},
  edition   = {2nd},
  publisher = {Cambridge University Press},
  year      = {1959},
  address   = {Cambridge, UK},
  volumes   = {I and II},
}

@book{MR2424078,
  author = {Adams, Robert A. and Fournier, John J. F.},
  title = {Sobolev spaces},
  publisher = {Elsevier/Academic Press, Amsterdam},
  year = {2003},
  volume = {140},
  pages = {xiv+305},
  edition = {Second}
}

@misc{OpenAI2025ChatGPT,
  author = {OpenAI},
  title = {ChatGPT (GPT-5.2)},
  year = {2025},
  month = {December},
  howpublished = {\url{chat.openai.com}},
  note = {Large language model},
  url = {chat.openai.com}
}

\begin{appendix}

\chapter{Existence and Uniqueness of the Minimizer}\label{unique_minimizer}

\begin{definition}
Define the set of functions $S = C^0(\mathbb{T}^m)\cap H^k(\mathbb{T}^m)$
\end{definition}
\begin{definition}
Denote $\Omega = \mathbb{T}^m$
\end{definition}

\begin{definition}
Define the norm $\|.\|_{T^k(\mathbb{T}^m)}$ as
\begin{equation}
\label{tkdef}
 \|f\|_{T^k(\Omega)}^2 =     \|f\|_{L^2(\Omega)}^2 + \lambda \|\nabla^k f\|_{L^2(\Omega)}^2   
\end{equation}
 and $\lambda$ is a positive real constant.
\end{definition}

\section{Minimization Problem}
$\forall f \in S$, minimize the functional
\begin{equation}
\label{eq_edit_3}
C_{\lambda}(f) = \|f\|_{T^k(\Omega)}^2 + \frac{\lambda^2}{n}\sum\limits_{i = 1}^n(f(\boldsymbol{p_i})-a_i)^2
\end{equation}

\begin{theorem}
\label{th_eq}
For this particular set $S$, the norm $\|.\|_{T^k(\Omega)}$ is equivalent to the Sobolev norm $\|.\|_{H^k(\Omega)}$.
\end{theorem}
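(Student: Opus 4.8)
The plan is to diagonalize both norms on the Fourier side and reduce the claim to a pointwise comparison of two multiplier symbols. Since $\Omega = \mathbb{T}^m$, every $f\in S$ has a Fourier series with coefficients $\hat{f}_{\boldsymbol{l}}$, $\boldsymbol{l}\in\mathbb{Z}^m$, and Parseval's theorem (as already used in Equations \ref{eq14}--\ref{eq15}) gives
\begin{equation}
\|f\|_{T^k(\Omega)}^2 = \sum_{\boldsymbol{l}\in\mathbb{Z}^m} A(\boldsymbol{l})\,|\hat{f}_{\boldsymbol{l}}|^2, \qquad \|f\|_{H^k(\Omega)}^2 = \sum_{\boldsymbol{l}\in\mathbb{Z}^m} B(\boldsymbol{l})\,|\hat{f}_{\boldsymbol{l}}|^2,
\end{equation}
where $A(\boldsymbol{l}) = 1 + \lambda\sum_{i=1}^m (2\pi l_i)^{2k}$ is the symbol of the $T^k$ norm and $B(\boldsymbol{l}) = \sum_{|\alpha|\le k}\prod_{i=1}^m (2\pi l_i)^{2\alpha_i}$ is the symbol of the full Sobolev norm $\|f\|_{H^k}^2 = \sum_{|\alpha|\le k}\|D^{\alpha}f\|_{L^2(\Omega)}^2$. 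Thus the two norms are equivalent on $S$ as soon as there are constants $0<c\le C<\infty$, depending only on $m,k,\lambda$, with $c\,A(\boldsymbol{l}) \le B(\boldsymbol{l}) \le C\,A(\boldsymbol{l})$ for every $\boldsymbol{l}\in\mathbb{Z}^m$.

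The lower bound is immediate: among the nonnegative terms of $B(\boldsymbol{l})$ are those with $\alpha=0$ and $\alpha=k\boldsymbol{e}_i$ for each $i$, so $B(\boldsymbol{l}) \ge 1 + \sum_{i=1}^m (2\pi l_i)^{2k}$, while $A(\boldsymbol{l}) \le \max(1,\lambda)\bigl(1+\sum_i (2\pi l_i)^{2k}\bigr)$; hence $A\le \max(1,\lambda)\,B$. The opposite direction carries the actual content, because $T^k$ omits every mixed derivative, so one must show that the pure coordinatewise powers $\sum_i (2\pi l_i)^{2k}$ dominate all monomials $\prod_i (2\pi l_i)^{2\alpha_i}$ with $|\alpha|\le k$. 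Writing $\xi_i = 2\pi l_i$ and applying the weighted AM--GM inequality with weights $p_i=\alpha_i/k$ and $p_0 = 1-|\alpha|/k\ge 0$, which sum to $1$, gives
\begin{equation}
\prod_{i=1}^m |\xi_i|^{2\alpha_i} = \prod_{i=1}^m \bigl(|\xi_i|^{2k}\bigr)^{p_i} \le p_0 + \sum_{i=1}^m p_i\,|\xi_i|^{2k} \le 1 + \sum_{i=1}^m |\xi_i|^{2k}.
\end{equation}
Summing over the finitely many multi-indices with $|\alpha|\le k$ yields $B(\boldsymbol{l}) \le N\bigl(1+\sum_i (2\pi l_i)^{2k}\bigr) \le \tfrac{N}{\min(1,\lambda)}\,A(\boldsymbol{l})$, where $N$ is the number of such multi-indices.

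I expect this AM--GM step to be the main obstacle, precisely because it encodes the nontrivial fact that the $L^2$ term together with the pure $k$-th derivatives controls all lower-order and mixed derivatives; the reverse inclusion is trivial since those pure derivatives already appear in $B$. An equivalent and perhaps cleaner packaging is to record the standard comparisons $B(\boldsymbol{l}) \asymp (1+\|\boldsymbol{l}\|_2^2)^k$ and $A(\boldsymbol{l}) \asymp 1 + \|\boldsymbol{l}\|_2^{2k}$, using equivalence of the $\ell^{2k}$ and $\ell^2$ norms on $\mathbb{R}^m$ so that $\|\boldsymbol{l}\|_{2k}^{2k}\asymp\|\boldsymbol{l}\|_2^{2k}$, together with the elementary fact $(1+t)^k\asymp 1+t^k$ for $t\ge 0$; chaining these gives $A\asymp B$ directly. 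Combining the two pointwise bounds, the weighted $\ell^2$ sums satisfy $c\|f\|_{T^k(\Omega)}^2 \le \|f\|_{H^k(\Omega)}^2 \le C\|f\|_{T^k(\Omega)}^2$, and since $\lambda$ is a fixed positive constant the equivalence constants $c,C$ depend only on $m,k,\lambda$, which proves the theorem.
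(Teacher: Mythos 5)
Your proof is correct and follows essentially the same route as the paper's: diagonalize both norms on the Fourier side via Parseval and compare the multiplier symbols pointwise, with (weighted) AM--GM supplying the key bound of the mixed monomials $\prod_i (2\pi l_i)^{2\alpha_i}$ by the pure powers $1+\sum_i (2\pi l_i)^{2k}$. The only differences are minor refinements: you keep $\lambda$ explicit rather than normalizing to $\lambda=1$, and you control all multi-indices $|\alpha|\le k$ directly, whereas the paper works from the reduced form $\|u\|_{L^2}^2+\sum_{|\alpha|=k}\|D^{\alpha}u\|_{L^2}^2$ and details only the top-order case.
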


\begin{proof}
As the norms $\|.\|_{T^k(\Omega)}$ for different $\lambda \in \mathbb{R}^+$ are equivalent, for this proof we consider only $\lambda = 1$.
Let $\boldsymbol{l} = (l_1,l_2,l_3,..l_m)\in \mathbb{Z}^m$ and $\alpha$ a multi-index. Let $u_{\boldsymbol{l}}$ be the Fourier series coefficients of $u\in S$, we have

\begin{equation}
    ||u||_{H^k(\Omega)}^2 = ||u||_{L^2(\Omega)}^2 + \sum_{|\alpha| =  k} ||D^{\alpha} u||_{L^2(\Omega)}^2.
\end{equation}
By Plancherel's theorem
\begin{equation}
\sum_{|\alpha| =  k} ||D^{\alpha} u||_{L^2(\Omega)}^2 =   \sum_{|\alpha| =  k} \sum_{\boldsymbol{l} \in \mathbb{Z}^k} ((2\pi)^k \boldsymbol{l^{\alpha}})^{2} |\hat{u}_{\boldsymbol{l}}|^2
=
 \sum_{\boldsymbol{l} \in \mathbb{Z}^k}( |\hat{u}_{\boldsymbol{l}}|^2 \sum_{|\alpha| =  k} ((2\pi)^k \boldsymbol{l^{\alpha}})^{2} )   
\end{equation}
By arithmetic mean-geometric mean inequality, it can be shown that 
\begin{equation}
    \sum_{|\alpha| =  k} ((2\pi)^k \boldsymbol{l^{\alpha}})^{2} \le C_k \sum_{i =  1}^m (2\pi l_i)^{2k}
\end{equation}
with $C_k$ depending only on $k$.
So 
\begin{equation}
    \sum_{|\alpha| =  k} ||D^{\alpha} u||_{L^2(\Omega)}^2 \le C_k \sum_{\boldsymbol{l} \in \mathbb{Z}^k}( |\hat{u}_{\boldsymbol{l}}|^2 \sum_{i = 1}^m (2\pi l_i)^{2k} ) = C_k \sum_{i = 1}^m( \sum_{\boldsymbol{l} \in \mathbb{Z}^k}(2\pi l_i)^{2k}|\hat{u}_{\boldsymbol{l}}|^2 ) 
\end{equation}
Using equation \ref{tkdef} and applying Plancherel's theorem in reverse
\begin{equation}
    \|u\|_{L^2(\Omega)}^2 + \sum_{i = 1}^m( \sum_{\boldsymbol{l} \in \mathbb{Z}^k}(2\pi l_i)^{2k}\hat{u}_{\boldsymbol{l}}^2 ) = \|u\|_{T^k(\Omega)}
\end{equation}
Therefore
\begin{equation}
\|u\|_{H^k(\Omega)} \le D_k \|u\|_{T^k(\Omega)}    
\end{equation}
where $D_k$ a constant depending only on $k$.
We can easily observe that $\|u\|_{H^k(\Omega)} \ge \|u\|_{T^k(\Omega)} $. Hence the norms are equivalent.
\end{proof}

\begin{theorem}
\label{th_uconv}
Given that $k>\frac{m}{2}$, If $u \in H^k(\Omega)$, then
\begin{equation}
u\in L^{\infty}(\Omega)
\end{equation}
and 
\begin{equation}
\|u\|_{L^{\infty}(\Omega)} \le K\|u\|_{H^k(\Omega)}
\end{equation}
with $K$  depending only on $k$ and $m$
\end{theorem}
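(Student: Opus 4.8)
The plan is to pass to Fourier series and exploit the fact that control of $\|u\|_{H^k}$ forces the Fourier coefficients of $u$ to be absolutely summable once $k>\frac{m}{2}$. Writing $u = \sum_{\boldsymbol{l}\in\mathbb{Z}^m}\hat{u}_{\boldsymbol{l}}\,e^{2\pi i\boldsymbol{l}\cdot\boldsymbol{x}}$, I would first record the trivial pointwise bound $|u(\boldsymbol{x})|\le \sum_{\boldsymbol{l}\in\mathbb{Z}^m}|\hat{u}_{\boldsymbol{l}}|$, valid for every $\boldsymbol{x}$ whenever the right-hand side is finite. Thus the whole theorem reduces to bounding the $\ell^1$ norm of the Fourier coefficients by $\|u\|_{H^k(\Omega)}$, with the added benefit that absolute summability of the series produces a continuous (hence genuinely bounded) representative of $u$, so that the claim $u\in L^{\infty}(\Omega)$ is obtained at the same time.

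To get this bound I would insert the weight $(1+\|\boldsymbol{l}\|_{2k}^{2k})^{1/2}$ and apply the Cauchy--Schwarz inequality:
\begin{equation}
\sum_{\boldsymbol{l}\in\mathbb{Z}^m}|\hat{u}_{\boldsymbol{l}}| \le \left( \sum_{\boldsymbol{l}\in\mathbb{Z}^m}(1+\|\boldsymbol{l}\|_{2k}^{2k})\,|\hat{u}_{\boldsymbol{l}}|^2 \right)^{1/2} \left( \sum_{\boldsymbol{l}\in\mathbb{Z}^m}\frac{1}{1+\|\boldsymbol{l}\|_{2k}^{2k}} \right)^{1/2}.
\end{equation}
By Plancherel's theorem the first factor equals, up to a constant depending only on $k$ and $m$, the $T^k$-norm $\|u\|_{T^k(\Omega)}$ introduced earlier (recall that $\sum_{i=1}^m(2\pi l_i)^{2k} = (2\pi)^{2k}\|\boldsymbol{l}\|_{2k}^{2k}$), and Theorem~\ref{th_eq} then identifies it, again up to constants, with $\|u\|_{H^k(\Omega)}$. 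So everything hinges on showing that the second factor is a finite constant depending only on $k$ and $m$.

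The main obstacle, and the only place where the hypothesis $k>\frac{m}{2}$ is actually used, is verifying the convergence of $\sum_{\boldsymbol{l}\in\mathbb{Z}^m}(1+\|\boldsymbol{l}\|_{2k}^{2k})^{-1}$. I would handle this by comparison with the integral $\int_{\mathbb{R}^m}(1+\|\boldsymbol{x}\|_{2k}^{2k})^{-1}\,\mathrm{d}^m\boldsymbol{x}$, using that all norms on $\mathbb{R}^m$ are equivalent so that $\|\boldsymbol{x}\|_{2k}^{2k}$ is comparable to $\|\boldsymbol{x}\|_2^{2k}$; passing to polar coordinates reduces the question to the radial integral $\int_1^{\infty} r^{m-1-2k}\,\mathrm{d}r$, which converges precisely when $2k>m$, i.e. when $k>\frac{m}{2}$. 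Calling the resulting finite value a constant depending only on $k$ and $m$, the two displayed estimates combine to give $\|u\|_{L^{\infty}(\Omega)}\le K\|u\|_{H^k(\Omega)}$ with $K=K(k,m)$, while the finiteness of $\sum_{\boldsymbol{l}}|\hat{u}_{\boldsymbol{l}}|$ established en route confirms that $u\in L^{\infty}(\Omega)$, completing the argument.
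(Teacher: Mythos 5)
Your proof is correct and follows essentially the same route as the paper's: expand $u$ in a Fourier series, insert a Sobolev weight, apply Cauchy--Schwarz together with Plancherel, and use $k>\frac{m}{2}$ to make the weight series converge. The only differences are cosmetic---you weight by $(1+\|\boldsymbol{l}\|_{2k}^{2k})^{1/2}$ and pass through the $T^k$-norm and Theorem \ref{th_eq}, whereas the paper weights by $\langle\boldsymbol{l}\rangle^{k}=(1+|\boldsymbol{l}|^{2})^{k/2}$ and identifies the first factor directly with $\|u\|_{H^k(\Omega)}$; your explicit integral-comparison verification of the convergence of the weight series is a detail the paper merely asserts.
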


\begin{proof}
Let us express $u$ in terms of its Fourier series coefficients $\hat u_{\boldsymbol{l}},\boldsymbol{l} \in \mathbb{Z}^m$, via the Fourier series expansion and then the trick is to multiply by 1 in disguise, with $\langle \boldsymbol{l}\rangle := \sqrt{1+|\boldsymbol{l}|^2}$  
\begin{equation}
u(\boldsymbol{x}) = \sum\limits_{\boldsymbol{l}\in \mathbb{Z}^m} \hat u_{\boldsymbol{l}}e^{2\pi i \boldsymbol{l}\cdot \boldsymbol{x}}  = \sum\limits_{\boldsymbol{l}\in \mathbb{Z}^m} \hat u_{\boldsymbol{l}} \langle \boldsymbol{l}\rangle^k \langle \boldsymbol{l}\rangle^{-k} e^{2\pi i \boldsymbol{l}\cdot \boldsymbol{x}} 
\end{equation}
by Hölder's inequality,
\begin{equation}
 |u(\boldsymbol{x})| \le \sum\limits_{\boldsymbol{l}\in \mathbb{Z}^m} \left |\hat u_{\boldsymbol{l}}\langle \boldsymbol{l}\rangle^k \right| \langle \boldsymbol{l}\rangle^{-k} \le \sqrt{\sum\limits_{\boldsymbol{l}\in \mathbb{Z}^m}|\hat u_{\boldsymbol{l}}\langle \boldsymbol{l}\rangle^{k}|^2 \sum\limits_{\boldsymbol{l}\in \mathbb{Z}^m}|\langle \boldsymbol{l}\rangle^{-k}|^2}   
\end{equation} 

By Plancherel's Theorem, $\sqrt{\sum\limits_{\boldsymbol{l}\in \mathbb{Z}^m}|\hat u_{\boldsymbol{l}}\langle \boldsymbol{l}\rangle^{k}|^2} = \|u\|_{H^{k}}$ and 
$K = \sqrt{\sum\limits_{\boldsymbol{l}\in \mathbb{Z}^m}|\langle \boldsymbol{l}\rangle^{-k}|}$ is a constant depending only on  $k,n$, which is finite as $k > m/2$. This completes the proof.
\end{proof}

\begin{theorem}
\label{completeness}
Given that $k>\frac{m}{2}$, any sequence in $S$, that converges in the norm $\|.\|_{T^k}$, also converges uniformly to a limit function in $S$.
\end{theorem}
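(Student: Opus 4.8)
The plan is to reduce everything to the Sobolev embedding of Theorem~\ref{th_uconv} together with completeness of $H^k(\mathbb{T}^m)$. Let $\{u_j\}_{j\geq 1}$ be a sequence in $S$ that converges in the norm $\|\cdot\|_{T^k}$; in particular it is Cauchy with respect to $\|\cdot\|_{T^k}$. By Theorem~\ref{th_eq} the norms $\|\cdot\|_{T^k(\Omega)}$ and $\|\cdot\|_{H^k(\Omega)}$ are equivalent on $S$, so $\{u_j\}$ is also Cauchy in the Sobolev norm $\|\cdot\|_{H^k(\Omega)}$.

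First I would invoke completeness of the Hilbert space $H^k(\mathbb{T}^m)$: a Cauchy sequence in $\|\cdot\|_{H^k}$ has a limit $u\in H^k(\mathbb{T}^m)$, so that $\|u_j-u\|_{H^k(\Omega)}\to 0$ as $j\to\infty$. Next I would upgrade this $H^k$ convergence to uniform convergence. Since $k>\tfrac{m}{2}$, Theorem~\ref{th_uconv} applies to the differences $u_j-u\in H^k(\Omega)$ and yields
\begin{equation}
\|u_j-u\|_{L^{\infty}(\Omega)}\leq K\|u_j-u\|_{H^k(\Omega)}\longrightarrow 0,
\end{equation}
so $u_j\to u$ uniformly on $\Omega=\mathbb{T}^m$. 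Because each $u_j$ is continuous (being in $S\subset C^0(\mathbb{T}^m)$) and the uniform limit of continuous functions is continuous, the limit $u$ admits a continuous representative; hence $u\in C^0(\mathbb{T}^m)\cap H^k(\mathbb{T}^m)=S$. This establishes both assertions at once: the sequence converges uniformly, and its limit lies in $S$.

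The argument is essentially routine once the two earlier theorems are available, so I do not expect a genuine obstacle. The one point requiring care is the passage from ``convergence in $\|\cdot\|_{T^k}$'' to a bona fide $H^k$-limit: I would resist taking a limit directly in $\|\cdot\|_{T^k}$, since that norm is introduced only as a norm on $S$ and the completeness of $(S,\|\cdot\|_{T^k})$ is precisely what this theorem is implicitly asserting. Instead the safe route is to transfer the Cauchy property into $H^k(\mathbb{T}^m)$ via the equivalence of Theorem~\ref{th_eq}, use the already-known completeness of $H^k$ to produce $u$, and only then return to the uniform/continuity conclusion through the embedding. Making the order of these steps explicit avoids any circularity.
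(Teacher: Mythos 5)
Your proof is correct and follows essentially the same route as the paper's: transfer from $\|\cdot\|_{T^k}$ to $\|\cdot\|_{H^k}$ via Theorem~\ref{th_eq}, upgrade to uniform convergence via the embedding of Theorem~\ref{th_uconv}, and conclude that the uniform limit of continuous functions is continuous, hence lies in $S$. The only difference is one of care rather than substance: you produce the limit by invoking completeness of $H^k(\mathbb{T}^m)$ on the Cauchy sequence, whereas the paper starts from an assumed $T^k$-limit $f$ and simply asserts ``it is evident that $f\in H^k(\Omega)$,'' so your ordering of the steps closes a small gap the paper leaves implicit.
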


\begin{proof}
Let $\{f_n\}\to f$ under the norm $\|.\|_{T^k}$, then $\|f_n-f\|_{T^k}\to 0$, so $\|f_n-f\|_{H^k} \to 0$, (as $\|.\|_{T^k}$ is equivalent to $\|.\|_{H^k}$ due to Theorem \ref{th_eq}) and hence due to Theorem \ref{th_uconv}, $\|f_n-f\|_{L^{\infty}(\Omega)} \to 0$. So, as this sequence of continuous functions with periodic boundary conditions converges uniformly, the limit function $f$ is also a continuous function with periodic boundary conditions and so $f\in M $. It is evident that $f \in H^k(\Omega)$, so $f\in S$. 
\end{proof}

\begin{theorem}
Given that $k>\frac{m}{2}$, the minimizer of the functional $C(f)$ over the set $S$ exists and is unique.
\end{theorem}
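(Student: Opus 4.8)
The plan is to apply the direct method of the calculus of variations, using that the three preceding theorems have turned $(S,\|\cdot\|_{T^k(\Omega)})$ into a Hilbert space on which point evaluation is continuous. Indeed, $\|\cdot\|_{T^k(\Omega)}$ is the norm induced by the inner product $\langle f,g\rangle_{T^k}=\langle f,g\rangle_{L^2(\Omega)}+\lambda\langle\nabla^k f,\nabla^k g\rangle_{L^2(\Omega)}$, and Theorem \ref{completeness} shows that $S$ is complete under this norm; hence $(S,\|\cdot\|_{T^k})$ is a Hilbert space, and in particular reflexive.

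First I would observe that $C_\lambda(f)\ge 0$ for every $f\in S$, so $M:=\inf_{f\in S}C_\lambda(f)$ is a finite nonnegative number. Choosing a minimizing sequence $\{f_j\}\subset S$ with $C_\lambda(f_j)\to M$, the bound $\|f_j\|_{T^k(\Omega)}^2\le C_\lambda(f_j)$ shows that $\{f_j\}$ is bounded in $S$. By reflexivity I can pass to a subsequence (not relabelled) with $f_j\rightharpoonup f$ weakly in $S$ for some $f\in S$.

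The next step is to let $j\to\infty$ in each term of $C_\lambda$. By Theorems \ref{th_eq} and \ref{th_uconv}, each point evaluation $g\mapsto g(\boldsymbol{p}_i)$ is a bounded linear functional on $(S,\|\cdot\|_{T^k})$, hence weakly continuous, so $f_j(\boldsymbol{p}_i)\to f(\boldsymbol{p}_i)$ and the data term $\frac{\lambda^2}{n}\sum_i(f_j(\boldsymbol{p}_i)-a_i)^2$ converges to its value at $f$. The quadratic term $\|\cdot\|_{T^k}^2$, being the square of a Hilbert-space norm, is weakly lower semicontinuous, so $\|f\|_{T^k}^2\le\liminf_j\|f_j\|_{T^k}^2$. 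Adding the two contributions gives $C_\lambda(f)\le\liminf_j C_\lambda(f_j)=M$, and since $f\in S$ this forces $C_\lambda(f)=M$; thus a minimizer exists.

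Uniqueness then follows from strict convexity. The data term is convex, being a finite sum of squares of affine functionals of $f$, while $\|f\|_{T^k}^2$ is strictly convex: the parallelogram law gives $\left\|\frac{f+g}{2}\right\|_{T^k}^2=\frac{1}{2}\|f\|_{T^k}^2+\frac{1}{2}\|g\|_{T^k}^2-\left\|\frac{f-g}{2}\right\|_{T^k}^2$, which is strictly smaller than $\frac{1}{2}\|f\|_{T^k}^2+\frac{1}{2}\|g\|_{T^k}^2$ whenever $f\neq g$. Hence $C_\lambda$ is strictly convex and admits at most one minimizer. The step I expect to be the main obstacle is the passage to the limit in the data term: a priori the minimizing sequence converges only weakly, and point evaluation is not continuous on $L^2$, nor on $H^k$ for small $k$. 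The hypothesis $k>\frac{m}{2}$ is exactly what rescues this, since Theorem \ref{th_uconv} makes $\delta_{\boldsymbol{p}_i}$ a bounded functional on $S$ and therefore weakly continuous; the norm term, by contrast, needs only the routine weak lower semicontinuity of Hilbert norms.
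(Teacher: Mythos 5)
Your proof is correct, but it follows a genuinely different route from the paper's. The paper argues via \emph{strong} convergence: it asserts that the minimizing sequence is Cauchy in $\|\cdot\|_{T^k}$, invokes Theorem \ref{completeness} to get a strong limit $g\in S$, and then passes to the limit in both terms of $C_\lambda$ using norm convergence and uniform (hence pointwise) convergence; uniqueness is attributed to ``uniform convexity of the $L^2$ norm.'' You instead run the standard direct method in the weak topology: boundedness of the minimizing sequence, a weakly convergent subsequence by reflexivity of the Hilbert space $(S,\|\cdot\|_{T^k})$, weak continuity of the evaluation functionals $\delta_{\boldsymbol{p}_i}$ (which is where Theorems \ref{th_eq} and \ref{th_uconv} and the hypothesis $k>\frac{m}{2}$ enter, exactly as you say), and weak lower semicontinuity of the squared norm. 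The two approaches use the same three ingredients (norm equivalence, the Sobolev embedding into $L^\infty$, closedness of $S$), but your version buys rigor at a point where the paper is thin: the paper's claim that the minimizing sequence is Cauchy ``due to the first term'' does not follow from positivity or boundedness alone --- it actually requires a parallelogram-law/convexity argument of precisely the kind you deploy for uniqueness, whereas your weak-compactness argument needs no such claim. Conversely, the paper's route (once the Cauchy claim is properly justified) yields strong convergence of the full minimizing sequence rather than weak convergence of a subsequence, and avoids any mention of weak topologies. Your uniqueness argument via strict convexity of $\|\cdot\|_{T^k}^2$ plus convexity of the data term is also the precise version of what the paper gestures at, and is stated for the correct norm ($T^k$ rather than $L^2$).
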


\begin{proof}
Let $\delta$ be the infimum of $C(f)$ over the set $S$. So there exists a sequence $\{f_n\}, f_n \in S$ such that $C(f_n) \to \delta$. Since both terms of $C(f)$ are positive, due to first term, $\{f_n\}$ is Cauchy under the norm $\|.\|_{T^k}$. Due to theorem \ref{completeness}, $S$ is a closed linear subspace of the Hilbert space $H^k$, and with the inner product induced by restriction, is also a Hilbert space in its own right. Hence the sequence $\{f_n\}$ converges to a limit function $g \in S$ under the norm $\|.\|_{T^k}$, which also means
\begin{equation}
\label{eq_edit_1}
\|f_n\|_{T^k} \to \|g\|_{T^k}
\end{equation}
Again due to theorem \ref{completeness}, $\{f_n\} \to g$ pointwise. So \begin{equation}
\label{eq_edit_2}
f_n(\boldsymbol{p_i}) \to g(\boldsymbol{p_i}), i = 1,2,..N
\end{equation}
Using equations \ref{eq_edit_3}, \ref{eq_edit_1}, \ref{eq_edit_2}, we can say that $C_{\lambda}(f_n)\to C{\lambda}(g)$, and therefore $C{\lambda}(g) = \delta$. Hence as $g \in S$, the infimum of $C{\lambda}(f)$ over set $S$ is attained in $S$. Uniqueness follows from the uniform convexity of $L^2$ norm.
\end{proof}
This proves the existence and uniqueness of the solution to the minimization problem.

\chapter{Expression for the minimizer in the space $TP_{\boldsymbol{\omega}}$}\label{A2}

\begin{theorem}
The solution to the PDE in Equation \ref{eq12b} is $u_{\lambda}$, which is given as
\begin{equation}
    \label{exp_th2p}  
  u_{\lambda}(\boldsymbol{x}) = \sum\limits_{i=1}^n \frac{c_i}{n}w_{\lambda}(\boldsymbol{x}-\boldsymbol{p}_i),
 \end{equation}
 where
 \begin{equation}\label{eq22_th2p}
    w_{\lambda}(\boldsymbol{x}) = P_{\boldsymbol{\omega}}g_{\lambda}(x) 
\end{equation}
$\pmb{c} = [c_1,c_2,...c_{N}]^T$ is given as \begin{equation}\label{eq28_thp}
 \pmb{c} = (\frac{1}{n}W_{\lambda}+\frac{1}{\lambda^2}I)^{-1}L,\end{equation} where the matrix $W_{\lambda}$ is given as 
\begin{equation}\label{eq29_th2p}
  W_{\lambda} = [\gamma_{ij}(\lambda)]_{n\times n},\gamma_{ij}(\lambda) = w_{\lambda}(\boldsymbol{p}_i-\boldsymbol{p}_j)\end{equation} and $$L = [q_1,q_2,\ldots q_n]^T.$$ 
\end{theorem}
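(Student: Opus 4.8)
The plan is to follow the proof of Theorem \ref{E-L_solution} essentially verbatim, making the single structural change that the test space is now $TP_{\boldsymbol{\omega}}$ rather than $C^{\infty}(\mathbb{T}^m)\cap H^k(\mathbb{T}^m)$. First I would introduce the auxiliary problem obtained from the E--L equation \ref{eq12b} by replacing the empirical term with a single Dirac source at the origin: find $w\in TP_{\boldsymbol{\omega}}$ such that
$$-\int_{\mathbb{T}^m}\phi\,\delta\,\mathrm{d}^m\boldsymbol{x} + \lambda\int_{\mathbb{T}^m}\nabla^k\phi\cdot\nabla^k w\,\mathrm{d}^m\boldsymbol{x} + \int_{\mathbb{T}^m}\phi\,w\,\mathrm{d}^m\boldsymbol{x} = 0 \qquad \forall\,\phi\in TP_{\boldsymbol{\omega}}.$$
Then, exactly as in Equations \ref{eq14}--\ref{eq20}, I would expand both $w$ and $\phi$ in Fourier series and apply Parseval. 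The crucial difference from the earlier proof is that every test function $\phi\in TP_{\boldsymbol{\omega}}$ has $\hat\phi_{\boldsymbol{l}}=0$ for $\boldsymbol{l}$ outside the box $-\boldsymbol{\omega}\le\boldsymbol{l}\le\boldsymbol{\omega}$, so selecting the indicator mode $\hat\theta$ supported at $\boldsymbol{\eta}$ (as in Equation \ref{eq18}) is legitimate only for $\boldsymbol{\eta}$ inside that box. This yields $\hat w_{\boldsymbol{\eta}}=1/(1+\lambda\|\boldsymbol{\eta}\|_{2k}^{2k})$ for $-\boldsymbol{\omega}\le\boldsymbol{\eta}\le\boldsymbol{\omega}$ and leaves the remaining coefficients unconstrained; imposing $w\in TP_{\boldsymbol{\omega}}$ forces those to vanish, giving precisely $w_\lambda=P_{\boldsymbol{\omega}}g_\lambda$ as in Equation \ref{eq22_th2p}.

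The superposition step then carries over without change. I would verify that each translate $w_\lambda(\boldsymbol{x}-\boldsymbol{p}_i)$ again lies in $TP_{\boldsymbol{\omega}}$ (translation preserves the degree of a trigonometric polynomial) and, via the substitution $\theta(\boldsymbol{t})=\phi(\boldsymbol{t}+\boldsymbol{p}_i)$ used in Equations \ref{der_sub_EL_1}--\ref{der_sub_EL_3}, that $\tfrac{c_i}{n}w_\lambda(\boldsymbol{x}-\boldsymbol{p}_i)$ solves the $i$-th shifted auxiliary equation. The key observation making the restricted setting legitimate is that $\theta\in TP_{\boldsymbol{\omega}}$ whenever $\phi\in TP_{\boldsymbol{\omega}}$, so the change of variables does not leave the admissible test space. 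Summing the $n$ shifted equations and setting $c_i=\lambda^2(q_i-u_\lambda(\boldsymbol{p}_i))$ reproduces the full E--L equation \ref{eq12b}, so $u_\lambda(\boldsymbol{x})=\sum_i\tfrac{c_i}{n}w_\lambda(\boldsymbol{x}-\boldsymbol{p}_i)$ is the sought minimizer.

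Finally, substituting $u_\lambda(\boldsymbol{p}_j)=\tfrac1n\sum_i c_i w_\lambda(\boldsymbol{p}_j-\boldsymbol{p}_i)$ into $c_i=\lambda^2(q_i-u_\lambda(\boldsymbol{p}_i))$ produces $n$ equations in the $n$ unknowns $c_i$, which I would rewrite as the linear system $(\tfrac1n W_\lambda+\tfrac1{\lambda^2}I)\boldsymbol{c}=L$ with $W_\lambda=[w_\lambda(\boldsymbol{p}_i-\boldsymbol{p}_j)]_{n\times n}$. Invertibility of $\tfrac1n W_\lambda+\tfrac1{\lambda^2}I$ then follows from the existence and uniqueness of the minimizer of the convex functional $D_\lambda$ over $TP_{\boldsymbol{\omega}}$: since a convex functional has at most one stationary point and the minimizer must solve the E--L equation, a singular matrix would contradict existence and uniqueness, exactly as argued for $M_\lambda$ in Theorem \ref{E-L_solution}.

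The hard part will be the Fourier-truncation step: one must argue carefully that restricting the test functions to $TP_{\boldsymbol{\omega}}$ pins down only the low Fourier modes of $w$ and leaves the high modes genuinely free, so that the solution becomes determined only after the additional demand $w\in TP_{\boldsymbol{\omega}}$. Equivalently, one must confirm that $W_\lambda$ is the correct truncated Gram-type matrix and that its positive definiteness persists, so that the eigenvalue and asymptotic machinery of Section \ref{sec_asymp_exp_interp_matrix} transfers to this setting for the companion convergence statement. Every other manipulation is term-by-term identical to the non-truncated case.
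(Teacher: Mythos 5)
Your proposal follows the paper's own proof (Appendix \ref{A2}) essentially step for step: the same auxiliary Dirac-source problem tested against $TP_{\boldsymbol{\omega}}$, the same Fourier/Parseval computation restricted to modes $-\boldsymbol{\omega}\le\boldsymbol{\eta}\le\boldsymbol{\omega}$, the same translation-and-superposition argument, and the same invertibility-from-uniqueness conclusion for $\tfrac1n W_\lambda+\tfrac1{\lambda^2}I$. If anything, your explicit remark that the truncated test space constrains only the low modes and that membership $w\in TP_{\boldsymbol{\omega}}$ must be imposed to kill the rest is stated more carefully than the paper's terse justification in its Equation \ref{eq21p}.
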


\begin{proof}

Consider the following PDE equation:

\begin{equation}
\begin{aligned}
\label{sub_ELp}
 -\int_{\mathbb{T}^m} \phi(\boldsymbol{x})\delta(\boldsymbol{x})\mathop{}\!\mathrm{d}^m\boldsymbol{x} + \lambda\int_{\mathbb{T}^m} \nabla^k\phi(\boldsymbol{x})\cdot\nabla^k f(\boldsymbol{x})\mathop{}\!\mathrm{d^m}x &+ \int_{\mathbb{T}^m} \phi(\boldsymbol{x})f(\boldsymbol{x})\mathop{}\!\mathrm{d}^m\boldsymbol{x}\\ &= 0 \forall \phi\in TP_{\boldsymbol{\omega}}.    
\end{aligned}
\end{equation}
      
Let $g$ be its solution. Now, consider the equation 
\begin{equation}
\label{sub_EL_2p}
    \begin{aligned}
        -\frac{c_i}{n}\int_{\mathbb{T}^m} \phi(\boldsymbol{x})\delta(\boldsymbol{x}-\boldsymbol{p}_i)\mathop{}\!\mathrm{d}^m\boldsymbol{x} + \lambda\int_{\mathbb{T}^m} \nabla^k\phi(\boldsymbol{x})\cdot\nabla^k f(\boldsymbol{x})\mathop{}\!\mathrm{d}^m\boldsymbol{x} &+ \int_{\mathbb{T}^m} \phi(\boldsymbol{x})f(\boldsymbol{x})\mathrm{d^m}x\\ &= 0 \forall \phi\in TP_{\boldsymbol{\omega}}.  
    \end{aligned}
\end{equation}

Substituting $f = c_iw(\boldsymbol{x}-\boldsymbol{p}_i)$ in the LHS of the equation \ref{sub_EL_2p} and  denoting it as $J$, we obtain 
\begin{equation}
\label{der_sub_EL_1p}
\begin{aligned}
J(\phi) = -\frac{c_i}{n}\int_{\mathbb{T}^m} \phi(\boldsymbol{x})\delta(\boldsymbol{x}-\boldsymbol{p}_i)\mathop{}\!\mathrm{d}^m\boldsymbol{x} &+ \lambda\int_{\mathbb{T}^m} \nabla^k\phi(\boldsymbol{x})\cdot\nabla^k c_iw(\boldsymbol{x}-\boldsymbol{p}_i)\mathop{}\!\mathrm{d}^m\boldsymbol{x}\\
&+ \int_{\mathbb{T}^m} \phi(\boldsymbol{x})c_iw(\boldsymbol{x}-\boldsymbol{p}_i)\mathop{}\!\mathrm{d}^m\boldsymbol{x}.\\
\end{aligned}
\end{equation}
Substituting $\boldsymbol{t} = \boldsymbol{x}-\boldsymbol{p}_i$, we obtain
\begin{equation}
    \label{der_sub_EL_2p}
    \begin{aligned}
     J(\phi) = -\frac{c_i}{n}\int_{\mathbb{T}^m} \phi(\boldsymbol{t}+\boldsymbol{p}_i)\delta(\boldsymbol{t})\mathop{}\!\mathrm{d}^m\boldsymbol{t} &+ \frac{c_i}{n}\lambda\int_{\mathbb{T}^m} \nabla^k\phi(\boldsymbol{t}+\boldsymbol{p}_i)\cdot\nabla^k w(\boldsymbol{t})\mathop{}\!\mathrm{d}^m\boldsymbol{t}\\
     &+ \frac{c_i}{n}\int_{\mathbb{T}^m} \phi(\boldsymbol{t}+\boldsymbol{p}_i)w(\boldsymbol{t})\mathop{}\!\mathrm{d}^m\boldsymbol{t}.\\
     \end{aligned}
\end{equation}

Let $\theta(\boldsymbol{t}) = \phi(\boldsymbol{t}+\boldsymbol{p}_i)$, so we have 

\begin{equation}
    \label{der_sub_EL_3p}
     J(\phi) = \frac{c_i}{n}\left\{-\int_{\mathbb{T}^m} \theta(\boldsymbol{t})\delta(t)\mathop{}\!\mathrm{d}^m\boldsymbol{t} + \lambda\int_{\mathbb{T}^m} \nabla^k\theta(\boldsymbol{t})\cdot\nabla^k w(\boldsymbol{t})\mathop{}\!\mathrm{d}^m\boldsymbol{t} + \int_{\mathbb{T}^m} \theta(\boldsymbol{t})w(\boldsymbol{t})\mathop{}\!\mathrm{d}^m\boldsymbol{t}\right\}.\\
\end{equation}

For every $\phi \in TP_{\boldsymbol{\omega}}$, we have $\theta \in TP_{\boldsymbol{\omega}}$, and using the fact that $w(t)$ is the solution of the Equation \ref{sub_ELp}, we have

\begin{equation}
    J(\phi) = 0 \forall \phi\in TP_{\boldsymbol{\omega}}.
\end{equation}

Hence, $\frac{c_i}{n}w(\boldsymbol{x}-\boldsymbol{p}_i)$ is a solution to Equation \ref{sub_EL_2p}.  Writing Equation \ref{sub_EL_2p} with different $c_i$, $i = 1,2,3...n$ and substituting $f=\frac{c_i}{n}w(\boldsymbol{x}-\boldsymbol{p}_i)$ in the $i^{th}$ equation (as it the solution of that equation), and adding up all the $n$ equations, we obtain 

\begin{equation}
    \label{der2_sub_ELp}
    \begin{aligned}
     \sum\limits_{i=1}^n\left\{-\frac{c_i}{n}\int_{\mathbb{T}^m} \phi(\boldsymbol{x})\delta(\boldsymbol{x}-\boldsymbol{p}_i)\mathop{}\!\mathrm{d}^m\boldsymbol{x}\right\} &+ \lambda\int_{\mathbb{T}^m}\\ \nabla^k\phi(\boldsymbol{x})\cdot\nabla^k\left( \sum\limits_{i=1}^n \frac{c_i}{n}w(\boldsymbol{x}-\boldsymbol{p}_i)\right)\mathop{}\!\mathrm{d}^m\boldsymbol{x} &+ \int_{\mathbb{T}^m} \phi(\boldsymbol{x})\sum\limits_{i=1}^n \frac{c_i}{n}w(\boldsymbol{x}-\boldsymbol{p}_i)\mathop{}\!\mathrm{d}^m\boldsymbol{x} = 0\\ \forall \phi\in TP_{\boldsymbol{\omega}}.
     \end{aligned}
\end{equation}

Denoting $u_{\lambda} = \sum\limits_{i=1}^n \frac{c_i}{n}w(\boldsymbol{x}-\boldsymbol{p}_i)$ and assuming $c_i = \lambda^2(q_i-f(\boldsymbol{p}_i))$ and noting that $\int_{\mathbb{T}^m}\phi(\boldsymbol{x})\delta(\boldsymbol{x}-\boldsymbol{p}_i)\mathrm{d}x = \phi(\boldsymbol{p}_i)$, we can rewrite Equation \ref{der2_sub_ELp} as 

\begin{equation}
    \label{der3_sub_ELp}
    \begin{aligned}
      -\frac{\lambda^2}{n}\sum\limits_{i=1}^n (q_i-u_{\lambda}(\boldsymbol{p}_i))\phi(\boldsymbol{p}_i) + \lambda\int_{\mathbb{T}^m} \nabla^k\phi(\boldsymbol{x})\cdot\nabla^k u_{\lambda}(\boldsymbol{x})\mathop{}\!\mathrm{d}^m\boldsymbol{x} &+ \int_{\mathbb{T}^m} \phi(\boldsymbol{x})u_{\lambda}(\boldsymbol{x})\mathop{}\!\mathrm{d}^m\boldsymbol{x}\\
     &= 0 \forall \phi\in TP_{\boldsymbol{\omega}},
     \end{aligned}
\end{equation}

which is same as the E--L equation, as in Equation \ref{eq12b}. Hence, $$u_{\lambda}(\boldsymbol{x}) = \sum\limits_{i=1}^n \frac{c_i}{n}w(\boldsymbol{x}-\boldsymbol{p}_i)$$ is the solution of the E-L equation. However, we still have no expression for $c_i$ and $w(\boldsymbol{x})$. To determine $w$, we need to solve Equation \ref{sub_ELp} as $w$ is its solution. Let $\boldsymbol{l} = (l_1,l_2,l_3,..l_m)\in \mathbb{Z}^m$. Let $\hat{w}_{\boldsymbol{l}}$ and $\hat{\phi}_{\boldsymbol{l}}$ be the Fourier series coefficients of $w$ and $\phi$. Using Parseval's theorem, we have the following equations

\begin{equation}\label{eq14p}
\int_{\mathbb{T}^m}\nabla^k\phi(\boldsymbol{x})\cdot \nabla^k w(\boldsymbol{x}) \mathop{}\!\mathrm{d}^m\boldsymbol{x} = \sum\limits_{i = 1}^m( \sum\limits_{\boldsymbol{l} \in \mathbb{Z}^k}(2\pi l_i)^{2k}\hat{w}_{\boldsymbol{l}} \hat{\phi}_{\boldsymbol{l}}).
\end{equation}

\begin{equation}\label{eq15p}
\int_{\mathbb{T}^m}\phi(\boldsymbol{x})\cdot w(\boldsymbol{x}) \mathop{}\!\mathrm{d}^m\boldsymbol{x} = \sum\limits_{\boldsymbol{l} \in \mathbb{Z}^k}\hat{w}_{\boldsymbol{l}} \hat{\phi}_{\boldsymbol{l}}.
\end{equation}

\begin{equation}\label{eq16p}
    \int_{\mathbb{T}^m}\phi(\boldsymbol{x})\delta(\boldsymbol{x})\mathop{}\!\mathrm{d}^m\boldsymbol{x} = \frac{1}{N} \sum\limits_{\boldsymbol{l} \in \mathbb{Z}^k} \hat{\phi}_{\boldsymbol{l}}. 
\end{equation}

Combining these equations in Equation \ref{sub_ELp}, we obtain 

\begin{equation}\label{eq17p}
     -\sum\limits_{\boldsymbol{l} \in \mathbb{Z}^k} \hat{\phi}_{\boldsymbol{l}} + \lambda\sum\limits_{i = 1}^m( \sum\limits_{\boldsymbol{l} \in \mathbb{Z}^k}(2\pi l_i)^{2k}\hat{w}_{\boldsymbol{l}} \hat{\phi}_{\boldsymbol{l}}) + \sum\limits_{\boldsymbol{l} \in \mathbb{Z}^k}\hat{w}_{\boldsymbol{l}} \hat{\phi}_{\boldsymbol{l}} = 0 .
\end{equation}

Now consider the function $\theta(x) = \cos{(2\pi\boldsymbol{\eta}\cdot\boldsymbol{x})} + i\sin{(2\pi\boldsymbol{\eta}\cdot\boldsymbol{x})}$ and let $\hat{\theta}_{\boldsymbol{l}}$ be its Fourier series coefficients. Then, by substituting this $\hat{\theta}_{\boldsymbol{l}}$ for $\hat{\phi}_{\boldsymbol{l}}$ in Equation \ref{eq17p}, we obtain 

\begin{equation}\label{eq19p}
    -1 + \lambda\sum\limits_{i = 1}^m (2\pi \eta{_i})^{2k}\hat{w}_{\boldsymbol{\eta}} + \hat{w}_{\boldsymbol{\eta}} = 0, 
\end{equation}

which implies 
\begin{equation}\label{eq20p}
    \hat{w}_{\boldsymbol{\eta}} = \frac{1}{1+2\pi\lambda\|\boldsymbol{\eta}\|_{2k}^{2k}}.  
\end{equation}

Applying this for each of $\boldsymbol{\eta} \le \boldsymbol{\omega}$, we obtain the solution for Equation \ref{sub_ELp} as $w$ whose Fourier series coefficients $\hat{w}_{\boldsymbol{l}}$ are given as \begin{equation}\label{eq21p}\begin{aligned}
    \hat{w}_{\boldsymbol{l}} &=  \frac{1}{1+\lambda\|\boldsymbol{l}\|_{2k}^{2k}}, \forall \boldsymbol{l}\in\mathbb{Z}^m\land-\boldsymbol{\omega}\le\boldsymbol{l}\le\boldsymbol{\omega}\\
    &= 0, \mbox{ elsewhere, as } \theta\in TP_{\boldsymbol{\omega}}
    \end{aligned}
\end{equation} 

Let us denote this solution as $w_{\lambda}$. Thus, by Fourier series expansion, we obtain 

\begin{equation}\label{eq22p}
    w_{\lambda}(\boldsymbol{x}) =  \sum_{\boldsymbol{l}\in\mathbb{Z}^m \land -\boldsymbol{\omega} \le \boldsymbol{l}\le \boldsymbol{\omega} } \frac{1}{1+\lambda\|\boldsymbol{l}\|_{2k}^{2k}} \cos{(2\pi\boldsymbol{l}\cdot\boldsymbol{x})}.
\end{equation}
Comparing this Equation with Equation \ref{eq22p}, we can note that \begin{equation}
    w_{\lambda} = P_{\boldsymbol{\omega}}g_{\lambda}
\end{equation}

Using $c_i = \lambda^2(q_i-f(\boldsymbol{p}_i))$ and that $u_{\lambda}(\boldsymbol{x}) = \frac{1}{n}\sum\limits_{i = 1}^n c_i w_{\lambda}(\boldsymbol{x}-\boldsymbol{p}_i)$ substituting the values of $u_{\lambda}(\boldsymbol{p}_i)$ from the later expression in the former equation, we obtain $n$ equations in $n$ unknowns $c_i$. Thus, we can solve for the $c_i$. Further, we obtain a matrix expression for $\pmb{c} = [c_1,c_2,...c_{n}]^T$ and is given as \begin{equation}\label{eq28p}
 \pmb{c} = (\frac{1}{n}W_{\lambda}+\frac{1}{\lambda^2}I)^{-1}L,\end{equation} where the matrix $W_{\lambda}$ is given as 
\begin{equation}\label{eq29p}
  W_{\lambda} = [\gamma_{ij}(\lambda)]_{n\times n},\gamma_{ij}(\lambda) = w_{\lambda}(\boldsymbol{p}_i-\boldsymbol{p}_j)
  \end{equation} and $L = [q_1,q_2,\ldots q_n]^T$. As the existence and uniqueness of the minimizer were already established, it is safe to assume that the matrix $\frac{1}{n}W_{\lambda}+\frac{1}{\lambda^2}I$ is invertible, allowing us to determine the unique minimizer of the functional as \begin{equation}
    \label{expp}  
  u_{\lambda}(\boldsymbol{x}) = \sum\limits_{i=1}^n \frac{c_i}{n}w_{\lambda}(\boldsymbol{x}-\boldsymbol{p}_i).
 \end{equation}
 \end{proof}
\end{appendix}
\end{document}